\definecolor{dblue}{HTML}{0455BF}
\definecolor{orng}{HTML}{D35400}
\definecolor{dred}{HTML}{D90404}
\definecolor{Dblue}{HTML}{8602DC}
\definecolor{bluep}{HTML}{0171BD}
\definecolor{pblue}{rgb}{0.1176,0.5647,1}
\definecolor{pgreen}{rgb}{0.1961,0.8039,0.1961}
\definecolor{pred}{rgb}{1.0,0.2706,0.0}
\definecolor{bred}{HTML}{FF0000}
\definecolor{bpurp}{HTML}{BF00BF}
\definecolor{bblu}{HTML}{0000FF}
\definecolor{bcyan}{HTML}{00BFBF}
\definecolor{byellow}{HTML}{BFBF00}
\definecolor{bgreen}{HTML}{008000}
\definecolor{lblue}{HTML}{0455BF}
\definecolor{dgreen}{HTML}{02724A}
\newif\ifTODO 
\definecolor{darkgreen}{rgb}{0,0.6,0}
\newtheorem{theorem}{Theorem}[section]
\newtheorem{corollary}[theorem]{Corollary}
\newtheorem{proposition}[theorem]{Proposition}
\newtheorem{lemma}[theorem]{Lemma}
\newtheorem{example}[theorem]{Example}
\newtheorem{definition}[theorem]{Definition}
\newtheorem{remark}[theorem]{Remark}
\newcommand{\diag}{\operatorname{diag}}
\newcommand{\cc}{\mathcal{C}}
\newcommand{\cO}{\mathcal{O}}
\newcommand{\rr}{\mathbb{R}}
\newcommand{\R}{\mathbb{R}}
\newcommand{\nn}{\mathbb{N}}
\newcommand{\NN}{\mathbb{N}}
\newcommand*{\vsepfbox}[1]{%
  \begingroup
    \sbox0{\fbox{#1}}%
    \setlength{\fboxrule}{0pt}%
    \mbox{\kern-\fboxsep\fbox{\unhbox0}\kern-\fboxsep}%
  \endgroup
}
\newcommand{\abs}{\operatorname{abs}}
\newcommand{\sign}{\operatorname{sign}}
\newcommand{\bx}{\bar{x}}
\DeclareMathOperator{\sgn}{sgn}
\DeclareMathOperator*{\argmin}{\mathrm{arg\,min}}
\title{On a Frank-Wolfe Approach for Abs-smooth Functions}
\author{%
    Timo Kreimeier,
    Sebastian Pokutta,\\
    Andrea Walther,
    and
    Zev Woodstock
}
\date{\today}
\begin{document}

\maketitle

\begin{abstract}
We propose an algorithm which appears to be the first bridge
between the fields of conditional gradient methods and abs-smooth
optimization. Our problem setting is motivated
by various applications that lead to nonsmoothness,
such as $\ell_1$ regularization, phase retrieval problems, or 
ReLU activation in machine learning. To handle the nonsmoothness
in our problem, we propose a generalization to the traditional
Frank-Wolfe gap and prove that first-order minimality is achieved
when it vanishes. We derive a convergence rate for our algorithm
which is {\em
identical} to the smooth case. Although our algorithm necessitates
the solution of a subproblem which is more challenging than the
smooth case, we provide an efficient numerical method for its
partial solution, and we identify several applications where our
approach fully solves the subproblem. Numerical and theoretical
convergence is demonstrated, yielding several conjectures.
\end{abstract}
\medskip
\textbf{Keywords:}
Frank-Wolfe algorithm, Active Signature Method, abs-smooth functions, nonsmooth optimization, convergence rate

\bigskip

\noindent
\section{Introduction}
\label{sec:intro}
Many applications, see, e.g., \textup{\cite{MBS21,TTP21}}, involve the
minimization of a function $f\colon\rr^n\to\rr$
subject to a compact and convex constraint $C\subset\rr^n$. That
is, one has to solve problems of the form
\begin{equation}
\label{e:p}
\underset{x\in C}{\text{min}}\;\; f(x)\;.
\end{equation}
We address \eqref{e:p} for the case when $f$ is an
{\em abs-smooth} function (see Definition~\ref{def:abssmooth}). In
a nutshell, the class of abs-smooth functions captures all
nonsmooth functions whose nondifferentiability arise as a result of
the absolute value function. Hence, this class also includes
smooth functions, $\max$, $\min$, and compositions/linear
combinations thereof. Since functions remain in this class when
used recursively \cite{Gr13}, one can readily show that many
important nonsmooth objective functions reside in this class like
piecewise linear models \cite{KuLiSt22}, $\ell_1$-regulariation as for LASSO problems
and phase retrival problems \cite{DaDrPa20}.
It was shown in \textup{\cite{Gr13}} that, for abs-smooth
functions, one can generate local piecewise linear models with
approximation properties 
up to second order similar in quality to a Taylor expansion; furthermore, all required
information to define these approximants are derivatives in the classical sense and
can be computed easily by
an extended version of algorithmic differentiation (AD)
\textup{\cite{GW08}}. By considering this subclass of nonsmooth
nonconvex objectives, efficient optimization algorithms with
guarantees such as first-order stationarity have been pioneered in
the last decade, see, e.g., \cite{FWG18,Gr13,GW18}. However,
it appears to be an open question as to whether or not one can
enforce closed convex constraints in an abs-smooth optimization
routine. In fact, even if one restricts to the subclass of {\em
piecewise linear} objective functions, it is currently only
possible to enforce piecewise linear constraints
\cite{KrWaGr21}.

To address this gap, we will draw upon the theory of Frank-Wolfe
(or {\em conditional gradient}) algorithms. In contrast to
more computationally-intensive methods which enforce the constraint
$C$ by evaluating proximity operators or projections, the
Frank-Wolfe algorithm, see, e.g.,
\textup{\cite{CGFWSurvey2022,FW56,CG66}}, only needs to
solve a linear optimization subproblem over $C$.
In traditional settings, the objective function $f$ is assumed to
be smooth, i.e., Lipschitz-continuously differentiable, and the
{\em Linear Minimization Oracle} (LMO) computes for the gradient
$c\equiv\nabla f(\bx)\in\rr^n$ at a current iterate $\bx\in \R^n$,
a point in $\argmin_{v\in C}\langle c\,,\, v\rangle$.
The Frank-Wolfe algorithm and its variants have gained popularity
because this linear minimization often requires fewer numerical
computations when compared to projection-based methods. For
instance, computing a projection onto
the spectrahedron $C=\{x\in\mathbb{S}^n_+\,|\,\text{Tr}(x)=1\}$
requires a full eigendecomposition; on the other hand, linear
minimization over $C$ only requires computing one dominant
eigenpair \textup{\cite{Ga16}}.

Frank-Wolfe approaches have been extensively studied for the smooth
case and various results are available for a myriad of settings
\textup{\cite{CBS21,CDP20,FW56,Ja13}}; see also
\textup{\cite{CGFWSurvey2022}} for an overview. However, despite
their significant computational advantages, to-date, conditional
gradient algorithms have rarely been studied outside of the smooth
setting. The approach proposed in \textup{\cite{O16}} extends
Frank-Wolfe methods to cover objective functions with
continuous, albeit non-Lipschitz, gradients.
However, for our applications, the target function $f$ is not
differentiable at all. A typical approach to overcome this problem
is to repeatedly minimize smoothed approximations of the objective
function \textup{\cite{Ne05}}. Then, the resulting algorithm
essentially encodes a proximity-type operator, which (as
demonstrated above) can be more costly than an LMO. Furthermore, as
a smoothed version of a nonsmooth function grows in fidelity to the
original, typically the Lipschitz constant of its gradient grows
arbitrarily large, so smooth optimization methods, whose rates
often depend on this smoothness constant, can exhibit poor behavior
in practice. Another approach to the nonsmooth setting relies on
the ability to compute a complete set of generalized derivatives of
$f$ at a given point \textup{\cite{RCH18}}, -- a capability which
is rarely available in practice.
The obvious approach, i.e., using a subgradient instead of a
gradient for the LMO model, has been shown to fail in general \textup{\cite[Example~1]{Ne18}}.
There are subgradient-based approaches \textup{\cite{AN22,GH16,O23}},
but they are restricted to the convex case or a somewhat special 
function class. It
appears that the analysis and theoretical tools from the abs-smooth
literature including algorithmic differentiation and piecewise linear
approximation have not been considered in conjunction with
conditional gradient algorithms. To our knowledge, this is 
the first work bridging this gap in the literature.

In this article, we propose a generalization of the Frank-Wolfe
algorithm for abs-smooth functions.
Our analysis broadens the traditional nonconvex
smooth setting of the Frank-Wolfe algorithm, which shows that an
optimality criterion known as the {\em Frank-Wolfe gap}, whose
definition relies on a gradient, asymptotically vanishes at a rate
of $\cO(1/\sqrt{t})$ \textup{\cite{P20}}. 
Due to the nonsmoothness inherent to our problem class, we propose
a generalization of the Frank-Wolfe gap for abs-smooth
functions that captures the original Frank-Wolfe gap as a special
case. We extend the current theory by proving that first-order
optimality is achieved when our generalized Frank-Wolfe gap vanishes.
Furthermore, we establish that our algorithm converges with a rate
which is {\em identical} to that of the Frank-Wolfe algorithm when
applied to nonconvex smooth objectives. This is consistent with
previous results, since the smooth Frank-Wolfe algorithm arises as
a particular case of our algorithm. 

While the smooth Frank-Wolfe setting requires the solution of a
linear minimization subproblem, it turns out that in general, our
nonsmooth analogue of the Frank-Wolf algorithm necessitates the
solution of a piecewise linear subproblem. In order to solve this
task, we adapt the Active Signature Method of \cite{GW18} to
produce a locally minimal solution of our subproblem. We also
establish that for several applications, including all constrained
LASSO problems, our subroutine Algorithm~\ref{alg:aasm} yields a
global solution to the piecewise linear subproblem. We show that
our new subroutine is computationally faster than other
state-of-the-art methods on constrained piecewise linear problems,
and we also benchmark our full algorithm on several standard
nonsmooth test problems. 

It is important to note that the ingredients of our method can be
easily provided once the function to be optimized is given as
computer program. Hence, it is readily applicable to such functions even
if the requirements of the convergence theory provided in this paper
can not be verified apriori. This is in contrast to many algorithms
for solving nonsmooth optimization problems. 

This article is structured as follows. In the remaining part of
this section, we introduce abs-smooth functions and their
properties.
In Section~\ref{sec:algo}, we present our Frank-Wolfe algorithm for
abs-smooth functions (Section~\ref{subsec:motivation}), propose the
generalized Frank-Wolfe gap, derive
guarantees of first-order optimality (Section~\ref{sec:opt}), and
provide convergence
guarantees for our algorithm (Section~\ref{sec:conv}). We also
discuss the wide
applicability of our approach to yield abs-smooth versions of many
variants of the vanilla Frank-Wolfe algorithm in
Remark~\ref{rmk:mods}. Section~\ref{sec:subprob} is dedicated to
the analysis and solution of our algorithm's piecewise linear
subproblem. Our strategy for solving the
piecewise linear subproblem is discussed in
Section~\ref{sec:subprobsolve}, and potential relaxations are
discussed in Section~\ref{sec:altsubprob}. Finally, numerical
results are shown in Section~\ref{sec:num}. A summary and
outlook are contained in Section~\ref{sec:final}. 

\subsection{Abs-smooth functions}
Throughout we will consider the following class of target
functions. 
\begin{definition}[$\cc_{\abs}^d(\rr^n)$, abs-smooth functions]
\label{def:abssmooth}
For any $d \in \nn$, the set of locally Lipschitz continuous
functions $f: \rr^n \to \rr, ~ y =f(x)$, defined by an
\emph{abs-smooth form}
\begin{align}
  \begin{split}
z & = F(x,z,|z|) \; , \\
y & = \varphi(x,z) \; ,
\end{split} \label{eq:abs}
\end{align}
with $F \in \cc^d(\rr^{n+s+s}, \rr^s)$ and $\varphi \in \cc^d(\rr^{n+s},\rr)$, such that $z_i$ is determined only by the values of $z_j$, $1 \leq j < i$, is denoted by $\cc^d_{\abs}(\rr^n)$. For any $d\ge 1$, a function $f\in \cc^d_{\abs}(\rr^n)$ is called \emph{abs-smooth}. The components $z_i$, $1 \leq i \leq s$, of $z$ are called
\emph{switching variables}.
\end{definition}
For $d=1$, Definition~\ref{def:abssmooth} encapsulates the original
definition in \textup{\cite{Gr13}} defining abs-smooth functions in a more formal way. This enables for example the convergence analysis contained in \cite{GW18}. For the results derived in this paper, any value of $d\ge 1$ suffice.
Despite the fact that the definition of abs-smoothness seems to be rather technical, the class of abs-smooth functions is quite broad. It encompasses a large subset of piecewise smooth functions in the sense of Scholtes
\textup{\cite{Sch12}} since the evaluation of $\max(.,.)$ and $\min(.,.)$ can be expressed
using the absolute value function.
 Furthermore, abs-smooth functions capture a wide variety of
nonsmooth
functions used in machine learning applications, e.g., the $\ell_1$-norm, i.e., 
\begin{align*}
 f:\R^n \mapsto \R,\quad  z_i = x_i, \;\; 1\le i\le n\;,\;\;z_{n+1} = \sum_{i=1}^n|z_i| \quad\mbox{and}\quad  f(x) =  z_{n+1},
\end{align*}
as well as the ReLU
activation function given by
\begin{align*}
  f:\R \mapsto \R,\quad z_1 = x, \; z_2 = |z_1|\quad\mbox{and}\quad  f(x) = \max(x,0)=0.5(x+|x|) = 0.5(x+z_2) = 0.5(z_1+z_2)\;.
\end{align*}
Using the recursive evaluation procedure from \textup{\cite{Gr13}},
it follows that the composition of abs-smooth functions remains
abs-smooth.
Therefore, using the absolute value function and smooth univariate
functions as building blocks,
one can combine them via recursion and linear combination to
construct a rich class of abs-smooth functions comprinsing, e.g., also 
$\operatorname{min}$ and $\operatorname{max}$. As a result, provided a neural network uses abs-smooth
activation functions, its resulting squared $\ell_2$-loss is also
abs-smooth. 

Also complementarity conditions or equilibrium constraints can be
formulated in an abs-smooth form via
\begin{align*}
  0 \le x \perp y \ge 0 \quad\Longleftrightarrow \quad 0 = f(x,y) = \min(x,y)=0.5(x+y+|x-y|), \; z_1 = x-y, \; z_2 = |z_1| \;.
\end{align*}
 Finally, it is important to note that, for the application of the generalized Frank-Wolfe method proposed in this paper, the user does not have to state the function evaluation in the form \eqref{eq:abs}, since correspondingly adapted AD tools can generate this representation from a straight-line code using only smooth operations and the absolute value in a completely automated fashion.

The main advantage of formulating all these applications as abs-smooth functions is the
localization of the nonsmoothness as argument of an otherwise smooth function.
In this way, the nonsmoothness can be explicitly exploited in combination with
standard smooth optimization theory. For example, if an abs-smooth function
is nonsmooth at a given point $x$ then at least one of the
switching variables as argument of the absolute value is evaluated
at zero -- motivating also the name for these variables.
\begin{example}[Simple example]
  The function $f:\R \mapsto\R, f(x) = \max(0,x,2x+1)$ is abs-smooth since it can be stated
  in the following form
  \begin{align*}
    f(x) = \max(0,x,2x+1) = 0.25(3x+1+|x+1|+|3x+1+|x+1||)
  \end{align*}
  such that one obtains as one abs-smooth representation
\begin{align*}
  z_1 & = x+1 \; , \\
z_2 & = 3x+1+|z_1| \; , \\
z_3 & = |z_1| + |z_2| \; ,\\
f(x) & = 0.25(3x+1+z_3) \; .
\end{align*}
As can be seen, at the only nonsmooth point $x=-0.5$, the switching variable $z_2$ is zero. Note, that it can happen that a switching variable is evaluated at zero but the function itself is smooth.   
\end{example}
\begin{example}[Mifflin II] The example Mifflin II given by the function
\label{ex:M2a}
  \begin{align}
    \label{eq:mif}
f : \R^2 \mapsto \R,\qquad f(x) = - x_1 +2\left(x_1^2+x_2^2-1\right)+1.75\left|x_1^2+x_2^2-1\right|\;,
  \end{align}
  see, e.g., \textup{\cite{BaKaMae14}}, is a well-established test case for nonsmooth optimization.
  It is abs-smooth since it has the representation
  \begin{align*}
    z_1 & = x_1^2+x_2^2-1\\
    z_2 & = |z_1|\\
    y   & = -x_1+2z_1+1.75z_2\;.
  \end{align*}  
\end{example}

\subsection{The abs-linearization}
For an abs-smooth function $f$ and a point $\bx$, Griewank
proposed in \textup{\cite{Gr13}} the so-called \emph{abs-linearization}
$\Delta f(\bx;\cdot)$ which can be used to construct a
{\em piecewise linear model}
$$f_{PL,\bx}(\cdot) \equiv f(\bx)+\Delta f(\bx; \cdot-\bx)\approx f(\bx+\cdot)\;.$$
This model can be viewed as a generalized Taylor
expansion at $\bx$ which simultaneously accounts for non\-smoothness 
and maintains second-order accuracy: 

\begin{theorem}
\label{theo:approx}
Suppose $f$ is abs-smooth on ${\cal D} \subset {\cal K} \subset \rr^n$, ${\cal D}$ open, ${\cal K}$ compact and convex. Then there exists $\gamma > 0$ such that for all $x,
\bx \in {\cal D}$
\begin{align}
\label{e:smooth}
\|f(x)- f_{PL,\bx}(x)\| = \|f(x) -f(\bx)- \Delta f(\bx; x-\bx)\| \leq \gamma \|x-\bx\|^2\;.
\end{align}
\end{theorem}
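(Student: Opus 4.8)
The plan is to exploit the layered structure of the abs-smooth form \eqref{eq:abs} and to propagate a second-order error estimate through the switching variables by induction. The two facts I would lean on are: first, since $F$ and $\varphi$ are (at least) twice continuously differentiable and $\mathcal K$ is compact, their first-order Taylor remainders admit a \emph{uniform} quadratic bound on $\mathcal K$; second, the absolute value is $1$-Lipschitz, so any error fed into a $|\cdot|$ is transmitted at most linearly and cannot destroy a quadratic estimate.

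First I would fix $\bx$, evaluate the reference trajectory $\bar z = \bar z(\bx)$, and recall that the abs-linearization defines increments $\Delta z_i$ recursively by replacing each smooth elemental $F_i$ with its tangent at $(\bx,\bar z,|\bar z|)$ while keeping the kink $|\bar z_j + \Delta z_j| - |\bar z_j|$ exact; the triangular dependence ($z_i$ depends only on $z_j$, $j<i$) makes this well defined. Writing $z_i(x)$ for the true switching variables and $\tilde z_i = \bar z_i + \Delta z_i$ for the model values, I would set $e_i \equiv z_i(x) - \tilde z_i$ and aim to show $|e_i| \le C_i \norm{x-\bx}^2$ for every $i$. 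A preliminary step is the linear bound $\norm{\Delta z_i} \le L_i\norm{x-\bx}$, which holds because the piecewise-linear model is built from compositions of linear maps and $|\cdot|$ with coefficients bounded on $\mathcal K$; this guarantees that the \emph{total} increment entering the argument of each $F_i$ is $\cO(\norm{x-\bx})$, keeping every Taylor remainder $\cO(\norm{x-\bx}^2)$.

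The inductive step is where the two ingredients combine. For the base case, $z_1 = F_1(x)$ depends on $x$ alone, so $e_1$ is exactly a first-order Taylor remainder of a $\cc^2$ function and is $\cO(\norm{x-\bx}^2)$. For the step, I would split $z_k(x) - \tilde z_k$ into (i) the Taylor remainder of $F_k$, which is quadratic by the uniform bound since its argument displacement is $\cO(\norm{x-\bx})$, plus (ii) the difference of $F_k$ evaluated at the true versus the model input point, which the Lipschitz continuity of $F_k$ controls by $\sum_{j<k}\big(|e_j| + \big||z_j(x)| - |\tilde z_j|\big|\big)$. Using $\big||z_j(x)| - |\tilde z_j|\big| \le |e_j|$ from the $1$-Lipschitz property, term (ii) is bounded by $2\sum_{j<k}|e_j|$, which the induction hypothesis renders $\cO(\norm{x-\bx}^2)$. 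Summing gives $|e_k| \le C_k\norm{x-\bx}^2$. Applying the identical argument one last time to $y = \varphi(x,z)$ — Taylor remainder of $\varphi$ plus linear propagation of the $e_i$ — yields the claimed bound with $\gamma$ the (finite) aggregate of the remainder, derivative, and Lipschitz constants over the finitely many layers.

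I expect the main obstacle to be the bookkeeping that keeps $\gamma$ \emph{uniform} over all $x,\bx \in \mathcal D$ rather than merely local: the Taylor-remainder bounds, the partial-derivative bounds, and the Lipschitz constants $L_i$ must all be taken as suprema over the compact $\mathcal K$, and one must verify that the reference trajectory together with its increments remains in the region where these suprema apply. The conceptual content — that composing a quadratically accurate smooth model with the exactly represented, $1$-Lipschitz absolute value preserves quadratic accuracy — is straightforward; the care lies in making the finitely many layers' constants explicit and in confirming that no layer amplifies the error beyond a constant factor.
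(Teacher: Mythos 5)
Your proof is correct, and it is essentially the paper's own approach: the paper gives no argument of its own here (its proof is the single line ``See \cite[Proposition~1]{Gr13}''), and your layer-by-layer induction --- uniform first-order Taylor remainders for the smooth elementals on the compact set, combined with the fact that the exactly-retained, $1$-Lipschitz absolute values propagate the accumulated errors $e_j$ at most linearly --- is precisely the argument behind that cited proposition. The one point worth making explicit is that the quadratic remainder bound requires Lipschitz-continuous first derivatives of $F$ and $\varphi$ (e.g.\ $d\geq 2$, as you assume), which is implicit in the theorem's appeal to \cite{Gr13} even though Definition~\ref{def:abssmooth} formally allows $d=1$.
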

\begin{proof}
See \textup{\cite[Proposition~1]{Gr13}}.
\end{proof}
For detailed explanation of the methodologies for generating
$\Delta f$, we refer to \textup{\cite{Gr13}} as well as the
Algorithmic Differentiation tools like ADOL-C~\cite{adolc},
CppAD~\cite{cppad}, and Tapenade~\cite{tapenade} which have been
extended to generate abs-linearizations and local piecewise linear
models in an automated fashion. It is important to emphasize that,
once a function $f$ is available as a C or Fortran program, its
piecewise linear approximant $f_{PL;\bx}(\cdot)$ at a
given point $\bx$ is accessible in an easy way. 
For our proof-of-concept implementation, a dense-matrix representation
of the piecewise linear approximant $f_{PL;\bx}(\cdot)$ is used
requiring $\mathcal{O}((n+s)(s+1))$ memory. Hence, it is only feasible
for small to medium size problems. However, the involved derivative
matrices are usually very sparse such that an efficent implementation would
require $\mathcal{O}((s+1)N)$ space, where
$N$ is the maximum number of nonzero entries in a row and expected to be rather small.
We observed this behavior, e.g., for the robust optimization of the Greek gas network in \cite{KrLiSrWa22}.
Once the representation of $\Delta
f(\overline{x};\cdot)$ is stored for a base point $\overline{x}$,
the cost of re-evaluating the abs-linear form is quite cheap requiring only matrix-vector products.

In contrast to many optimization approaches for the nonsmooth setting,
our advantage is that the piecewise linearization approximates $f$
by explicitly taking its nonsmoothness into account.
For intuition of how the abs-linearization behaves geometrically,
we provide the abs-linearization of Example~\ref{ex:M2a}.

\begin{example}[Mifflin II, continued] 
\label{ex:M2b}
For the Mifflin II function, its abs-linearization is given by $ \Delta f(\bx;\cdot) : \R^2 \mapsto \R$ with
  \begin{align}
    \label{eq:mif_pl}
    \Delta f(\bx; \Delta x) \!  =\! -\! \Delta x_1\! +\!2\left(2 \bx_1 \Delta x_1\!+\!2 \bx_2\Delta x_2\right)\!+\!1.75\left(\left|\bx_1^2\!+\!\bx_2^2\!-\!1\!+\!2 \bx_1 \Delta x_1\!+\!2 \bx_2\Delta x_2\right|\!-\! \left|\bx_1^2\!+\!\bx_2^2\!-\!1\right|\right)\;.
  \end{align}
The function itself together with its piecewise linear
model $f_{PL,\bx}(.)$  at the point $\bx = (-1.8,1.8)$ are illustrated in
Figure~\ref{fig:mif}.
\end{example}
\begin{figure}[ht]
\begin{center}             
\includegraphics[width=6cm]{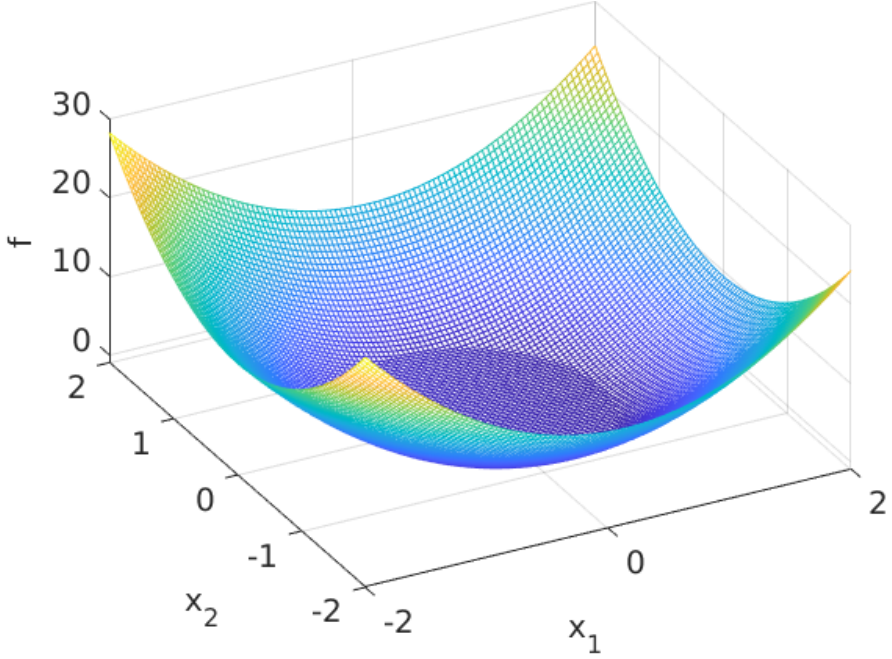}\qquad\qquad
\includegraphics[width=6cm]{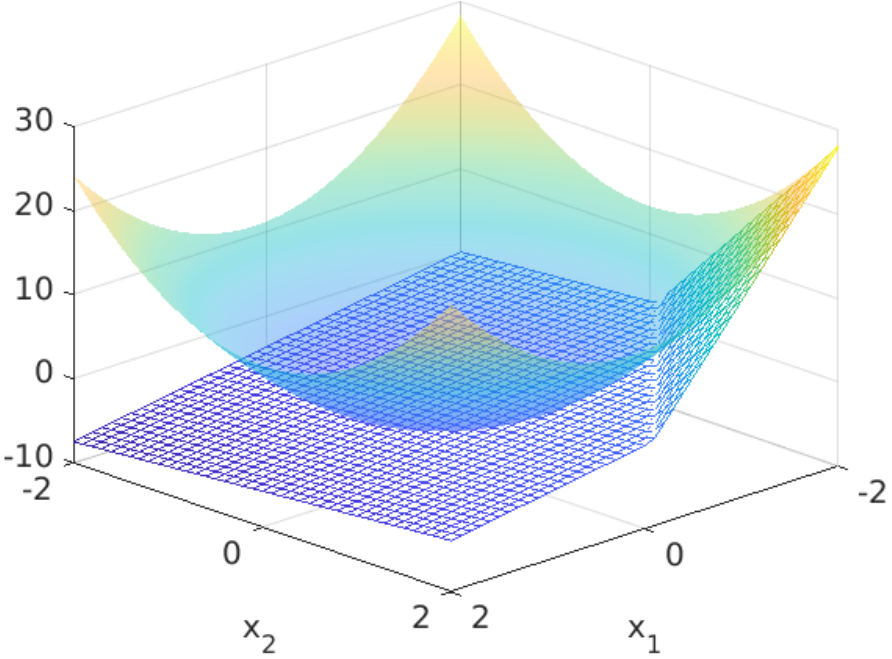}
\end{center}
\caption{Abs-smooth function \eqref{eq:mif} from
Example~\ref{ex:M2b} and its piecewise linear model
\eqref{eq:mif_pl}.}
\label{fig:mif}
\end{figure} 

While abs-smooth functions may lack gradients, they are guaranteed
to possess directional derivatives; furthermore, within a
neighborhood around the development point $\bx$, their directional
derivatives coincide with the 
abs-linearization:
\begin{proposition}
\label{prop:dir}
Let $f\in\cc_{\abs}^d(\rr^n)$ and $\bx\in\rr^n$. Then there
exists a constant $\rho>0$ such that, for every $d\in\rr^n$,
the directional (Bouligand) derivative $f'(\bx;d)$ exists and if
$\|d\|\leq\rho$ then
\begin{equation}
\label{e:dir}
\Delta f(\bx; d)= f'(\bx;d)\;.
\end{equation}
As a result, for every $\alpha\in[0,1]$, $\|d\|\leq\rho$ implies
positive homogeneity $\Delta f(\bx;\alpha d)=\alpha\Delta
f(\bx;d)$.
\end{proposition}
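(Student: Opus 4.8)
The plan is to combine two ingredients: the \emph{local} positive homogeneity of the abs-linearization $\Delta f(\bx;\cdot)$ in a neighbourhood of the origin, and the second-order estimate of Theorem~\ref{theo:approx}. The homogeneity is the technical heart of the argument; once it is available, the existence of the directional derivative and the identity \eqref{e:dir} follow by a short limiting argument.

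First I would establish that there is $\rho>0$ such that $\Delta f(\bx;\alpha d)=\alpha\,\Delta f(\bx;d)$ for all $\alpha\in[0,1]$ and all $\|d\|\le\rho$. For this I exploit the triangular structure of the abs-smooth form \eqref{eq:abs}: writing $\bar z$ for the switching variables evaluated at $\bx$, the abs-linearization is built recursively, with each increment $\Delta z_i$ an affine function of $\Delta x$, of the lower increments $\Delta z_j$ ($j<i$), and of the terms $|\bar z_j+\Delta z_j|-|\bar z_j|$. I would argue by induction on $i$ that, on a sufficiently small ball, the map $\Delta x\mapsto\Delta z_i(\Delta x)$ is continuous, vanishes at $0$, and is positively homogeneous of degree one. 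The induction step rests on a case distinction for each switch: if $\bar z_j\neq 0$, then by continuity and $\Delta z_j(0)=0$ the quantity $\bar z_j+\Delta z_j$ keeps the sign of $\bar z_j$ on a small enough ball, so the term reduces to the linear $\sgn(\bar z_j)\,\Delta z_j$; if $\bar z_j=0$, the term is exactly $|\Delta z_j|$, which inherits positive homogeneity from $\Delta z_j$. Since there are only finitely many switches, intersecting the finitely many radii yields a single $\rho>0$ that works for all increments, and hence for $\Delta f$ itself.

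With homogeneity in hand, I turn to \eqref{e:dir}. Fix $d$ with $\|d\|\le\rho$. For $t\in(0,1]$ we have $\|td\|\le\rho$, so $\Delta f(\bx;td)=t\,\Delta f(\bx;d)$, and Theorem~\ref{theo:approx} applied at $x=\bx+td$ gives $|f(\bx+td)-f(\bx)-t\,\Delta f(\bx;d)|\le\gamma t^2\|d\|^2$. Dividing by $t$ and letting $t\downarrow 0$ shows that the difference quotient converges to $\Delta f(\bx;d)$, proving both that $f'(\bx;d)$ exists and that it equals $\Delta f(\bx;d)$. For an arbitrary direction $d$ one applies the same estimate to the rescaled direction $\rho d/\|d\|$ (which has norm $\rho$) and uses the elementary scaling relation $f'(\bx;\lambda d)=\lambda f'(\bx;d)$ for $\lambda\ge 0$ to conclude that $f'(\bx;d)$ still exists. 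Finally, the closing homogeneity assertion is immediate: for $\|d\|\le\rho$ and $\alpha\in[0,1]$ one has $\|\alpha d\|\le\rho$, whence $\Delta f(\bx;\alpha d)=f'(\bx;\alpha d)=\alpha f'(\bx;d)=\alpha\,\Delta f(\bx;d)$, combining the standard positive homogeneity of the Bouligand derivative with \eqref{e:dir}.

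I expect the main obstacle to be the inductive homogeneity argument, specifically making the neighbourhood uniform across the recursion: each sign-locking step shrinks the admissible radius, and one must confirm that the increments $\Delta z_i$ remain continuous and origin-preserving at every level so that the sign of every inactive switch $\bar z_j\neq 0$ is genuinely frozen on the final common ball. The limiting argument via Theorem~\ref{theo:approx} is then routine.
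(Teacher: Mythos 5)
Your proof is correct, but it takes a different route from the paper in one structural sense: the paper does not prove this proposition at all --- its ``proof'' is a citation to \textup{\cite[Section~3.2]{Gr13}} and \textup{\cite[pp.~390]{GW16}} for the identity \eqref{e:dir}, plus the one-line observation that the final claim follows from positive homogeneity of $f'(\bx;\cdot)$. You instead reconstruct the underlying argument: an inductive sign-locking proof that $\Delta f(\bx;\cdot)$ is locally positively homogeneous (freezing the sign of each inactive switch $\bar z_j\neq 0$ on a small ball, keeping $|\Delta z_j|$ exact when $\bar z_j=0$, and intersecting the finitely many radii), followed by a limiting argument via Theorem~\ref{theo:approx} that simultaneously yields existence of $f'(\bx;d)$ and the identity, with rescaling to cover directions of norm exceeding $\rho$. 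This is essentially the mechanism in Griewank's cited work, and it also makes rigorous the paper's informal remark after the proposition that $\rho$ measures the distance to the nearest kink of $\Delta f(\bx;\cdot)$. Your closing step for the homogeneity claim (passing through $f'(\bx;\cdot)$) coincides exactly with the paper's justification. What the citation buys is brevity; what your version buys is a self-contained proof whose only external input is Theorem~\ref{theo:approx}, at the cost of the bookkeeping you correctly flag --- ensuring the common radius works at every level of the recursion, which is unproblematic since the number of switching variables $s$ is finite.
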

\begin{proof}
See \textup{\cite[Section~3.2]{Gr13}}, the final
claim follows from positive homogeneity of $f'(\bx;\cdot)$.
\end{proof}
Whether or not $\bx$ resides on a kink, $\rho$ basically describes
the distance to the next-closest kink of $\Delta f(\bx;\cdot)$. Hence, even though $\rho>0$ is guaranteed,
$\rho$ can tend towards $0$ as $\overline{x}$ approaches a point
where $f$ is nondifferentiable. Proposition~\ref{prop:dir}
illuminates the connection between the abs-linearization and a {\em
first-order minimal} point $\bx$ 
which satisfies, for all $v\in C$,
\begin{equation}
\label{e:fom}
f'(\bx;v-\bx)\geq 0\;.
\end{equation}

\section{A Frank-Wolfe algorithm for abs-smooth functions}
\label{sec:algo}

We begin in Section~\ref{subsec:motivation} by presenting our
Abs-Smooth Frank-Wolfe algorithm (ASFW) and describing the
guiding principles in its design. Sections~\ref{sec:opt} and
\ref{sec:conv} establish its theoretical footing by, respectively,
providing optimality conditions and convergence guarantees. When
the objective function $f$ is smooth, the original Frank-Wolfe
algorithm can be viewed as a special case of our algorithm; in this
setting, the optimality and convergence guarantees for the
nonconvex Frank-Wolfe algorithm would arise as corollaries to
our results.

\subsection{Motivating the algorithm}
\label{subsec:motivation}

A vanilla Frank-Wolfe algorithm for the smooth setting is
shown in Algorithm~\ref{alg:fw} (see also
\textup{\cite{CGFWSurvey2022}}).
The hallmark of the Frank-Wolfe algorithm is in Line~\ref{fw:2} --
the so-called LMO step which, at iteration $t\in\nn$, solves
\begin{equation}
\label{e:fwgapmax}
\underset{v\in C}{\text{max}}\;\;\langle -\nabla
f(x_t)\,,\,v-x_t\rangle\;.
\end{equation}
The optimal value of \eqref{e:fwgapmax} is called the {\em
Frank-Wolfe gap}. An advantage of Frank-Wolfe algorithms is that
the subproblem \eqref{e:fwgapmax} is oftentimes computationally
cheaper than evaluating the projection onto $C$. However, since
gradients are not available in the nonsmooth
setting,
this step of Frank-Wolfe algorithms must be changed. As far as we
are aware, there is no generic replacement of \eqref{e:fwgapmax} for
the fully nonsmooth setting of Frank-Wolfe algorithms. For instance,
Nesterov pointed out in \textup{\cite[Example~1]{Ne18}}  that the simple
approach of
replacing the gradient $\nabla f(x_t)$ in \eqref{e:fwgapmax} with a
subgradient -- similar to subgradient descent methods -- fails in
general. However, it was recently shown that a subgradient-based
approach does work for some nonsmooth functions~\textup{\cite{O23}}.

A key component in deriving a comprehensive convergence theory for
Algorithm~\ref{alg:fw} in the smooth nonconvex setting is the
{\em smoothness inequality}, which provides a quadratic
upper-bound on the difference between a function and its
first-order Taylor approximation. 
Even though we do not have this inequality, the abs-linearization
provides a piecewise linear model which also has second order
accuracy (Theorem~\ref{theo:approx}); in fact, this model {\em is}
the Taylor series approximation in the smooth case. Hence, it is
reasonable to consider a generalization of Algorithm~\ref{alg:fw},
Step~\ref{fw:2} which relies on the abs-linearization.

\begin{algorithm}[t]
\caption{Frank-Wolfe algorithm}
\label{alg:fw}  
\begin{algorithmic}[1]
\REQUIRE Point $x_0\in C$, smooth function $f$
\FOR{$t=0$ \textbf{to} $\dotsc$}
\STATE Choose step size $\alpha_t \in (0,1]$
\STATE Compute $v_t\in \argmin_{v\in C}\langle\nabla
f(x_t)\,,\,v\rangle$
\label{fw:2}
\STATE $x_{t+1}= (1-\alpha_t)x_t+\alpha_tv_t$ 
\ENDFOR
\end{algorithmic}
\end{algorithm}

Our approach in Algorithm~\ref{alg:asfw} is to (a) generalize the
Frank-Wolfe gap for abs-smooth functions, and (b) solve its
corresponding piecewise linear optimization problem as a subroutine
in our algorithm. As we further illustrate in
Section~\ref{sec:opt}, we propose the following {\em generalized Frank-Wolfe gap} for
abs-smooth functions
\begin{equation}
  \label{e:gengap}
  \max_{v\in C}\frac{-\Delta f(x_t;\alpha_t(v-x_t))}{\alpha_t}\;,
  \end{equation}
whose corresponding optimization problem is formally equivalent to
Line~\ref{a:s2} in Algorithm~\ref{alg:asfw}.

As we will see in Section~\ref{sec:opt}, just as in the smooth
setting, first-order minimality is achieved when the generalized Frank-Wolfe
gap vanishes. Furthermore, Section~\ref{sec:conv} demonstrates that
the approximation properties in Theorem~\ref{theo:approx} can be
leveraged to acquire identical convergence rates to the smooth
setting.

\begin{algorithm}[ht]
\caption{Abs-Smooth Frank-Wolfe (ASFW) algorithm}
\label{alg:asfw}  
\begin{algorithmic}[1]
\REQUIRE Point $x_0\in C$, abs-smooth function $f$
\FOR{$t=0$ \textbf{to} $\dotsc$}
\STATE Choose step size $\alpha_t \in (0,1]$
\STATE Compute $v_t\in\argmin_{v\in C}\Delta f(x_t,\alpha_t(v-x_t))$
\label{a:s2}
\STATE $x_{t+1}= (1-\alpha_t)x_t+\alpha_tv_t$ 
\ENDFOR
\end{algorithmic}
\end{algorithm}

\subsection{Optimality criterion: Generalizing the Frank-Wolfe gap}
\label{sec:opt}
In this section, we establish that, just as in 
the smooth setting, first-order optimality is acquired when
the generalized Frank-Wolfe gap \eqref{e:gengap} vanishes.

We note that \eqref{e:gengap} is consistent with the original
Frank-Wolfe gap \eqref{e:fwgapmax}. Indeed, if $f$ is smooth, then for
every $\bx\in\mathbb{R}^n$,
$\Delta f(\bx;\cdot)=\langle\nabla f(\bx)\,,\,\cdot\rangle$
\cite{Gr13}. Hence, under the assumption of smoothness, at
every iteration $t\in\nn$ we have
\begin{equation}
\label{e:8}
\dfrac{-\Delta f(x_t;\alpha_t(v_t-x_t))}{\alpha_t}=
\frac{-\langle\nabla f(x_t)\,,\,\alpha_t(v_t-x_t)\rangle}{\alpha_t}=
\langle-\nabla f(x_t)\,,\,v_t-x_t\rangle\;.
\end{equation}
In other words, our generalization captures the traditional
Frank-Wolfe gap \eqref{e:fwgapmax} as a special case.
This generalization from a linear gap to a nonlinear gap also
mirrors the form of the perspective function from nonlinear
analysis \textup{\cite{C18}}.

In the smooth setting, if the Frank-Wolfe gap vanishes this implies
first-order optimality. Hence, our goal is to show the same for our
generalization. To-date, it was only possible to
infer that $f$ is Clarke-stationary at $\bx$, if $\Delta
f(\bx;\cdot)$ was Clarke-stationary at $0$
\textup{\cite[Lemma~1]{FWG18}}.
However, since there are no guarantees that the solution to our
problem will lie on a vertex of $C$, we do not have a guarantee
that $v_t-x_t\to 0$. In fact, there are concrete examples where
this never occurs -- even for the smooth setting of
Algorithm~\ref{alg:fw} \cite[Figure~2.4]{CGFWSurvey2022}. We
therefore provide a new result below.

\begin{theorem}[First-order minimality]
\label{theo:fom1}
Let $C\subset\rr^n$ be nonempty and convex, let
$\bx\in C$, let $\alpha\in(0,1]$, and let
$f\in\cc_{\abs}^d(\rr^n)$. Suppose that 
\begin{equation}
\label{e:fom2}
\max_{v\in C}\dfrac{-\Delta f(\bx;\alpha(v-\bx))}{\alpha}=0\;.
\end{equation}
Then $f$ is first-order minimal at $\bx$, i.e., for every $v\in C$,
$f'(\bx;v-\bx)\geq 0$.
In particular, $\min_{v\in C}\Delta
f(\bx;v-\bx)=0$ implies first-order minimality at $\bx$.
\end{theorem}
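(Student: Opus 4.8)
The plan is to unwind the hypothesis into a pointwise nonnegativity statement, and then transport it down to arbitrarily small directions, where the abs-linearization coincides exactly with the directional derivative. First I would observe that, since $\alpha>0$, the hypothesis \eqref{e:fom2} is equivalent to $\Delta f(\bx;\alpha(v-\bx))\ge 0$ for every $v\in C$. The difficulty is that $\alpha(v-\bx)$ may be a large direction, whereas the identification $\Delta f(\bx;\cdot)=f'(\bx;\cdot)$ furnished by Proposition~\ref{prop:dir} is only valid on the $\rho$-ball. My remedy is to exploit the convexity of $C$: for any fixed $v\in C$ and any $\beta\in(0,1]$, the point $(1-\beta)\bx+\beta v$ again lies in $C$, so applying the hypothesis at this point yields $\Delta f(\bx;\alpha\beta(v-\bx))\ge 0$.

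Next I would shrink $\beta$. Fixing $v$ and choosing $\beta>0$ small enough that $\|\alpha\beta(v-\bx)\|\le\rho$, Proposition~\ref{prop:dir} applies to the direction $\alpha\beta(v-\bx)$ and gives $\Delta f(\bx;\alpha\beta(v-\bx))=f'(\bx;\alpha\beta(v-\bx))$. Positive homogeneity of the Bouligand derivative then lets me factor out the positive scalar $\alpha\beta$, so that $0\le \Delta f(\bx;\alpha\beta(v-\bx))=\alpha\beta\,f'(\bx;v-\bx)$. Dividing by $\alpha\beta>0$ yields $f'(\bx;v-\bx)\ge 0$, which is precisely first-order minimality at $\bx$, since $v\in C$ was arbitrary.

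Finally, the ``in particular'' claim is just the special case $\alpha=1$: the equality $\min_{v\in C}\Delta f(\bx;v-\bx)=0$ is equivalent to $\max_{v\in C}\bigl(-\Delta f(\bx;v-\bx)\bigr)=0$, which is \eqref{e:fom2} with $\alpha=1$, so first-order minimality follows immediately from the general statement.

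I expect the main obstacle to be the gap between the ``global'' direction $\alpha(v-\bx)$ appearing in the hypothesis and the ``local'' regime $\|d\|\le\rho$ in which Proposition~\ref{prop:dir} holds; the convexity-and-rescaling argument above is the crux that bridges it. One subtlety to handle with care is that $\rho$ in Proposition~\ref{prop:dir} depends on $\bx$ but not on $v$, so the cutoff $\beta$ may be chosen separately for each $v$ without any circularity, and no uniform lower bound on $\beta$ over $C$ is needed.
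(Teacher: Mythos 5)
Your proof is correct and is essentially the paper's own argument: both rely on convexity of $C$ to rescale the direction into the $\rho$-ball where Proposition~\ref{prop:dir} identifies $\Delta f(\bx;\cdot)$ with $f'(\bx;\cdot)$, then use positive homogeneity to recover the sign of $f'(\bx;v-\bx)$. The only difference is presentational -- the paper argues by contradiction from a hypothetical descent direction, while you argue directly -- which is merely the contrapositive of the same reasoning.
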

\begin{proof}
For the sake of contradiction, suppose that there exists a point
$v\in C$ yielding a descent direction $f'(\bx;v-\bx)<0$.
Recall that Proposition~\ref{prop:dir} guarantees $\Delta
f(\bx;\cdot)=f'(\bx;\cdot)$ for arguments with norm bounded by
$\rho>0$. So, for $\tau\in(0,1)$ satisfying
$\alpha\tau\|v-\bx\|\leq\rho$ we have $\alpha\tau
v+(1-\alpha\tau)\bx\in C$ and hence
\begin{align*}
\dfrac{-\Delta f(\bx;(\alpha\tau v + (1-\alpha\tau)\bx) -
\bx)}{\alpha} =
\dfrac{-\Delta f(\bx;\alpha\tau (v-\bx))}{\alpha}
=\dfrac{-f'(\bx;\alpha\tau (v-\bx))}{\alpha}
=-\tau f'(\bx;v-\bx)
>0\;,
\end{align*}
which is absurd since it contradicts \eqref{e:fom2}.
\end{proof}
Note that first-order minimality is sometimes also called
d-stationarity, see, e.g., \textup{\cite{O23}}. Hence, if
\eqref{e:fom2} holds for $\bx$, then $\bx$ is d-stationary.
\begin{corollary}[Convex optimality]
Let $C\subset\rr^n$ be a nonempty compact convex set, let $\bx\in
C$, and let $f\in\cc_{\abs}^d(\rr^n)$ be convex. Suppose that
$\min_{v\in C}\Delta f(\bx;v-\bx)=0$. Then $f(\bx)=\min_{x\in C}
f(x)$.
\end{corollary}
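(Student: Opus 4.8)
The plan is to reduce the statement to first-order minimality, which Theorem~\ref{theo:fom1} already delivers, and then to upgrade first-order minimality to global optimality using convexity. First I would invoke the ``in particular'' clause of Theorem~\ref{theo:fom1}: the hypothesis $\min_{v\in C}\Delta f(\bx;v-\bx)=0$ implies that $\bx$ is first-order minimal, i.e., $f'(\bx;v-\bx)\geq 0$ for every $v\in C$. This disposes of the abs-smooth machinery and leaves a purely convex-analytic question.

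The second step is the standard lower bound relating a convex function to its directional derivative. For convex $f$, the difference quotient $t\mapsto \tfrac{f(\bx+t(v-\bx))-f(\bx)}{t}$ is nondecreasing on $(0,\infty)$, so the one-sided limit $f'(\bx;v-\bx)$ as $t\to 0^+$ is bounded above by the value of the quotient at $t=1$. This yields the subgradient-type inequality
\[
f(v)\;\geq\; f(\bx)+f'(\bx;v-\bx)\qquad\text{for all }v\in C.
\]

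Combining the two steps, for every $v\in C$ we obtain $f(v)\geq f(\bx)+f'(\bx;v-\bx)\geq f(\bx)$, where the last inequality uses nonnegativity of the directional derivative from first-order minimality. Hence $\bx$ attains the infimum of $f$ over $C$ (compactness of $C$ together with continuity of $f$ guarantees this infimum is finite and attained), giving $f(\bx)=\min_{x\in C}f(x)$.

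The only point requiring care --- and the main, albeit mild, obstacle --- is justifying the convex lower bound, since one must confirm that the Bouligand directional derivative supplied by Proposition~\ref{prop:dir} coincides with the one-sided limit of difference quotients that convexity controls. For convex functions this is immediate, because the relevant limit exists and equals the infimum of the monotone quotients. Note that neither smoothness nor attainment of a vertex of $C$ enters the argument, so the conclusion is genuinely global rather than local.
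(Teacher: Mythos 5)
Your proposal is correct and follows the same route as the paper: invoke Theorem~\ref{theo:fom1} to get first-order minimality, then use convexity to conclude global optimality. The only difference is that the paper cites this last step as a known sufficient optimality condition for convex functions \textup{\cite[Theorem~3.8]{Ja20}}, whereas you prove it inline via the monotone difference-quotient inequality $f(v)\geq f(\bx)+f'(\bx;v-\bx)$, which is a correct and self-contained justification of exactly the fact being cited.
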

\begin{proof}
  From Theorem~\ref{theo:fom1}, we can conclude that
  $$f'(\bx;v-\bx)\geq 0\qquad \forall v\in C\;,$$
which is a sufficient optimality condition for convex functions.
\end{proof}

\subsection{Convergence results}
\label{sec:conv}

In this section, we provide convergence proofs for
Algorithm~\ref{alg:asfw} under various settings. As we will see in
this
section, our $\cO(1/\sqrt{t})$ convergence results achieve the same
optimal rate as for the nonconvex {\em smooth} setting
\textup{\cite{P20}},
even though our functions are nonsmooth. For this purpose, we show
some important basic properties.
\begin{corollary}[Sign of $\Delta f(x;\alpha(v_*-x))$]
\label{cor:sign}
Let $x\in C$, let $\alpha>0$, and let $f\in\cc_{\abs}^d(\rr^n)$.
Suppose that $v_*\in C$ satisfies one of the following statements
\begin{enumerate}
\item
\label{cor:signi}
$\Delta f(x,\alpha(v_*-x)) \le\Delta f(x,0)$
\item
\label{cor:signii}
$v_*\in\argmin_{v\in C}\Delta f(x;\alpha(v-x))$.
\end{enumerate}
Then $\Delta f(x;\alpha(v_*-x))\leq 0$.
\end{corollary}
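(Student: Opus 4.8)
The plan is to recognize that both hypotheses force the quantity $\Delta f(x;\alpha(v_*-x))$ below $\Delta f(x;0)$, and that $\Delta f(x;0)=0$. The crux of the argument is therefore this single identity, which I would obtain from Proposition~\ref{prop:dir}. Since $\|0\|=0\leq\rho$, the zero direction is admissible, so the positive-homogeneity relation $\Delta f(x;\alpha d)=\alpha\,\Delta f(x;d)$ applies; taking the scalar equal to $0$ collapses its right-hand side and yields $\Delta f(x;0)=0$. Equivalently, $\Delta f(x;0)=f'(x;0)=0$, since the Bouligand directional derivative in the zero direction vanishes.

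Granting $\Delta f(x;0)=0$, the first case is immediate: hypothesis~\ref{cor:signi} is exactly $\Delta f(x;\alpha(v_*-x))\leq\Delta f(x;0)=0$. For the second case I would reduce to the first by exploiting that $x\in C$ is itself a feasible competitor in the minimization. If $v_*\in\argmin_{v\in C}\Delta f(x;\alpha(v-x))$, then the optimal value cannot exceed the value attained at $v=x$, so
\begin{equation*}
\Delta f(x;\alpha(v_*-x))\leq\Delta f(x;\alpha(x-x))=\Delta f(x;0)=0\;.
\end{equation*}
Hence hypothesis~\ref{cor:signii} entails the conclusion as well, and in fact it implies the inequality assumed in~\ref{cor:signi}.

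I do not expect any genuine obstacle here: the statement is essentially a one-line consequence once $\Delta f(x;0)=0$ is in place. The only point deserving care is the justification of that identity, namely confirming that the zero direction satisfies the norm bound $\|d\|\leq\rho$ required by Proposition~\ref{prop:dir} — which it does trivially, since $0\leq\rho$.
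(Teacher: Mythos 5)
Your proof is correct and structurally identical to the paper's: both establish $\Delta f(x;0)=0$, observe that this settles \ref{cor:signi} immediately, and reduce \ref{cor:signii} to \ref{cor:signi} by using $x\in C$ as a feasible competitor in the minimization, i.e., $\Delta f(x;\alpha(v_*-x))\le\Delta f(x;\alpha(x-x))=\Delta f(x;0)$. The only point of divergence is the lemma used to justify the crux identity $\Delta f(x;0)=0$: the paper cites Theorem~\ref{theo:approx} (taking $x=\bx$ in \eqref{e:smooth} gives $\|\Delta f(\bx;0)\|\le\gamma\cdot 0$, hence $\Delta f(\bx;0)=0$), whereas you invoke Proposition~\ref{prop:dir}, via $\Delta f(x;0)=f'(x;0)=0$ or, equivalently, positive homogeneity with scalar $0$. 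Both are sound one-line justifications; the paper's route avoids any mention of the radius $\rho$, while yours requires the admissibility check $\|0\|\le\rho$, which, as you note, holds trivially. The two proofs are interchangeable.
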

\begin{proof}
Theorem~\ref{theo:approx} implies that $\Delta f(x;0)=0$ proving
\ref{cor:signi}. To show \ref{cor:signii}, since $x\in C$, we know
\begin{align*}
\Delta f(x;\alpha(v_*-x)) \le \Delta f(x;\alpha(x-x)) = \Delta
f(x;0) \; ,
\end{align*}
so using \ref{cor:signi} we obtain the required estimate.
\end{proof}

\begin{lemma}
\label{l:t}
Let $C$ be a nonempty compact convex set with diameter
$D\in\rr_{\geq 0}$. Assume that $f\in\cc_{\abs}^d(\rr^n)$. Then,
for every $t\in\mathbb{N}$, the iterates generated by
Algorithm~\ref{alg:asfw} satisfy
\begin{align}
\label{e:smooth2}
0&\leq
\frac{-\Delta f(x_t;\alpha_t(v_t-x_t))}{\alpha_t}
\leq\frac{f(x_t)-f(x_{t+1})}{\alpha_t}+\alpha_t\gamma D^2
\\
\label{e:smoothsum}
t\min_{\substack{0\leq k\leq {t-1}}}
\frac{-\Delta f(x_k;\alpha_k(v_k-x_k))}{\alpha_k}&\leq
\sum_{k=0}^{t-1}\frac{-\Delta f(x_k;\alpha_k(v_k-x_k))}{\alpha_k}
\leq
\sum_{k=0}^{t-1}\frac{f(x_k)-f(x_{k+1})}{\alpha_k}
+\gamma D^2\sum_{k=0}^{t-1}\alpha_k\;.
\end{align}
\end{lemma}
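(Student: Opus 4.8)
The plan is to establish \eqref{e:smooth2} first and then read off \eqref{e:smoothsum} by elementary manipulations.

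For the leftmost inequality of \eqref{e:smooth2}, I would invoke Corollary~\ref{cor:sign}. By Line~\ref{a:s2} of Algorithm~\ref{alg:asfw}, the iterate satisfies $v_t\in\argmin_{v\in C}\Delta f(x_t;\alpha_t(v-x_t))$, which is exactly hypothesis~\ref{cor:signii}. Hence Corollary~\ref{cor:sign} gives $\Delta f(x_t;\alpha_t(v_t-x_t))\le 0$, and dividing by $\alpha_t>0$ yields $0\le -\Delta f(x_t;\alpha_t(v_t-x_t))/\alpha_t$.

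For the rightmost inequality of \eqref{e:smooth2}, the workhorse is the second-order approximation bound of Theorem~\ref{theo:approx}. The update rule gives $x_{t+1}-x_t=\alpha_t(v_t-x_t)$, so applying Theorem~\ref{theo:approx} with development point $\bx=x_t$ evaluated at $x=x_{t+1}$ (and noting that the norm there reduces to an absolute value, since $f$ is scalar-valued) produces
\[
|f(x_{t+1})-f(x_t)-\Delta f(x_t;\alpha_t(v_t-x_t))|\le\gamma\|x_{t+1}-x_t\|^2=\gamma\alpha_t^2\|v_t-x_t\|^2\;.
\]
Because $v_t,x_t\in C$ and $C$ has diameter $D$, I bound $\|v_t-x_t\|\le D$ and drop the absolute value on one side to obtain $f(x_{t+1})-f(x_t)-\Delta f(x_t;\alpha_t(v_t-x_t))\le\gamma\alpha_t^2 D^2$. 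Solving for $-\Delta f(x_t;\alpha_t(v_t-x_t))$ and dividing by $\alpha_t$ delivers the claimed bound $\tfrac{f(x_t)-f(x_{t+1})}{\alpha_t}+\alpha_t\gamma D^2$.

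Finally, \eqref{e:smoothsum} follows at once. Writing $a_k\equiv -\Delta f(x_k;\alpha_k(v_k-x_k))/\alpha_k$, the left inequality holds because $\min\{a_0,\dots,a_{t-1}\}$ is no larger than each $a_k$, so $t\min_k a_k\le\sum_{k=0}^{t-1}a_k$; the right inequality is obtained by summing the rightmost estimate of \eqref{e:smooth2} over $k=0,\dots,t-1$ and collecting the common factor $\gamma D^2$ from the second terms. The only delicate point is bookkeeping: one must apply Theorem~\ref{theo:approx} at the correct pair of points and track the orientation of the inequality when discarding the unused lower estimate in the two-sided bound. There is no genuine analytical obstacle here, since the heavy lifting—second-order accuracy of the abs-linearization and the sign of the gap term—is already supplied by Theorem~\ref{theo:approx} and Corollary~\ref{cor:sign}.
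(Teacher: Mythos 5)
Your proof is correct and follows essentially the same route as the paper: Corollary~\ref{cor:sign} gives the sign of the gap, Theorem~\ref{theo:approx} applied at $\bx=x_t$, $x=x_{t+1}$ with the diameter bound $\|v_t-x_t\|\leq D$ gives the upper estimate, and division by $\alpha_t$ followed by summation yields \eqref{e:smooth2} and \eqref{e:smoothsum}. The only difference is that you spell out the steps (the two-sided bound, the diameter estimate) that the paper's terser proof leaves implicit.
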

\begin{proof}
  Since $C$ is compact, one can find an open set  ${\cal D}$ with $C\subset  {\cal D}$. Then,
  it follows from Theorem~\ref{theo:approx} that there exists a $\gamma >0$ such that
  \begin{align*}
    -\Delta f(x_t;\alpha_t(v_t-x_t))\leq
f(x_t)-f(x_{t+1})+\gamma\|x_{t+1}-x_t\|^2
  \end{align*}
  independent of the iteration counter $t$. 
Now, Corollary~\ref{cor:sign} yields
\begin{align*}
0\leq-\Delta f(x_t;\alpha_t(v_t-x_t))\leq
f(x_t)-f(x_{t+1})+\gamma\|x_{t+1}-x_t\|^2=
f(x_t)-f(x_{t+1})+\alpha_t^2\gamma\|v_t-x_t\|^2\;.
\end{align*}
Division by $\alpha_t$ yields \eqref{e:smooth2}, and summing over
all iterations from $0$ to $t-1$ yields \eqref{e:smoothsum}.
\end{proof}

These properties already suffice to show the first nonconvex
convergence result, where no additional assumptions on the
piecewise linear model are required.

\begin{theorem}[Open-loop convergence]
\label{theo:t}
Let $C$ be a nonempty compact convex set with diameter
$D\in\rr_{\geq 0}$. Assume that $f\in\cc_{\abs}^d(\rr^n)$.
Then, for every $t\in\mathbb{N}$, the iterates generated by Algorithm~\ref{alg:asfw} with
$\alpha_t=1/\sqrt{1+t}$ satisfy
\begin{align}
\label{e:open-ncv}
0\leq
\min_{\substack{0\leq k\leq {t-1}}}
\frac{-\Delta f\left(x_k;\alpha_k(v_k-x_k)\right)}{\alpha_k}\leq
\frac{1}{t}\sum_{k=0}^{t-1}\dfrac{-\Delta
f(x_k;\alpha_k(v_k-x_k))}{\alpha_k}
\leq\mathcal{O}\left(\frac{1}{\sqrt{t}}\right)\;.
\end{align}
\end{theorem}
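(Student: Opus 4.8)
The plan is to start from Lemma~\ref{l:t}, whose inequality \eqref{e:smoothsum} already carries most of the load. Dividing \eqref{e:smoothsum} through by $t$ immediately yields the first two inequalities of \eqref{e:open-ncv}: the leftmost quantity is nonnegative by Corollary~\ref{cor:sign}, and the bound $\min_k(\cdot)\leq\frac1t\sum_k(\cdot)$ is merely the statement that a minimum is at most an average. The entire problem therefore reduces to showing that the right-hand side of \eqref{e:smoothsum}, namely
\[
\sum_{k=0}^{t-1}\frac{f(x_k)-f(x_{k+1})}{\alpha_k}+\gamma D^2\sum_{k=0}^{t-1}\alpha_k,
\]
is of order $\mathcal{O}(\sqrt{t})$; after division by $t$ this produces the claimed $\mathcal{O}(1/\sqrt{t})$ rate.

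First I would dispatch the easy second term. With $\alpha_k=1/\sqrt{1+k}$, an integral comparison for the decreasing integrand gives $\sum_{k=0}^{t-1}\alpha_k=\sum_{j=1}^{t}j^{-1/2}\leq\int_0^t x^{-1/2}\,dx=2\sqrt{t}$, so that $\gamma D^2\sum_k\alpha_k\leq 2\gamma D^2\sqrt{t}$.

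The main obstacle is the first, telescoping-like sum $\sum_{k=0}^{t-1}\alpha_k^{-1}(f(x_k)-f(x_{k+1}))$, whose weights $\alpha_k^{-1}=\sqrt{1+k}$ grow with $k$ and thus obstruct a clean telescope. I would handle it by summation by parts (Abel summation): writing $a_k\equiv\sqrt{1+k}$ and reindexing the subtracted sum, one obtains
\[
\sum_{k=0}^{t-1}a_k\bigl(f(x_k)-f(x_{k+1})\bigr)=a_0 f(x_0)+\sum_{k=1}^{t-1}(a_k-a_{k-1})f(x_k)-a_{t-1}f(x_t).
\]
Here each increment $a_k-a_{k-1}=\sqrt{1+k}-\sqrt{k}>0$ and $\sum_{k=1}^{t-1}(a_k-a_{k-1})=a_{t-1}-a_0$ telescopes. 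The crucial auxiliary fact is that $f$ is bounded along the trajectory: since $x_0\in C$ and each $x_{k+1}$ is a convex combination of $x_k\in C$ and $v_k\in C$, every iterate lies in the compact set $C$, on which the continuous (indeed locally Lipschitz) function $f$ attains a finite maximum $f_{\max}$ and minimum $f_{\min}$. Using positivity of the increments to bound $f(x_k)\leq f_{\max}$ in the positively weighted terms and $f(x_t)\geq f_{\min}$ in the subtracted term, the right-hand side collapses to $a_{t-1}(f_{\max}-f_{\min})=\sqrt{t}\,(f_{\max}-f_{\min})$.

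Combining the two estimates gives
\[
\sum_{k=0}^{t-1}\frac{f(x_k)-f(x_{k+1})}{\alpha_k}+\gamma D^2\sum_{k=0}^{t-1}\alpha_k\leq\sqrt{t}\bigl[(f_{\max}-f_{\min})+2\gamma D^2\bigr],
\]
so dividing \eqref{e:smoothsum} by $t$ delivers the asserted $\mathcal{O}(1/\sqrt{t})$ bound with the explicit constant $(f_{\max}-f_{\min})+2\gamma D^2$. I expect the summation-by-parts step, together with the observation that all iterates remain in the compact constraint set (whence $f$ is bounded along the trajectory), to be the only nontrivial ingredient; the remainder is bookkeeping already assembled in Lemma~\ref{l:t} and Corollary~\ref{cor:sign}.
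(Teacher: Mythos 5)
Your proof is correct and takes essentially the same route as the paper's: both start from \eqref{e:smoothsum} in Lemma~\ref{l:t}, apply summation by parts to $\sum_{k=0}^{t-1}\alpha_k^{-1}\bigl(f(x_k)-f(x_{k+1})\bigr)$, and use boundedness of $f$ on the compact set $C$ to obtain an $\mathcal{O}(\sqrt{t})$ bound for the right-hand side. The only cosmetic difference is that the paper works with the shifted gaps $g_k=f(x_k)-f(x_*)\geq 0$, bounded by $\beta D$ via a Lipschitz constant $\beta$ of $f$ over $C$, whereas you bound $f(x_k)$ directly between $f_{\min}$ and $f_{\max}$ from the extreme value theorem --- an equally valid (indeed slightly tighter) choice of constant.
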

\begin{proof}
Let $x_*$ be a minimizer of $f$ over $C$, and for every $t\in\NN$,
let $g_t=f(x_t)-f(x_*)$. Corollary~\ref{cor:sign} and
Lemma~\ref{l:t} yield
\begin{equation}
\label{e:ol1}
  \begin{split}
0 &\leq \min_{\substack{0\leq k\leq {t-1}}}
\frac{-\Delta f(x_k;\alpha_k(v_k-x_k))}{\alpha_k}
\leq
\frac{1}{t}\sum_{k=0}^{t-1}\frac{-\Delta
f(x_k;\alpha_k(v_k-x_k))}{\alpha_k}\\
& \leq
\frac{1}{t}\left(\sum_{k=0}^{t-1}\frac{g_k-g_{k+1}}{\alpha_k}
+\gamma D^2\sum_{k=0}^{t-1}\alpha_k\right)\;.
\end{split}
  \end{equation}
Since $f$ is continuous and $C$ is compact, $f$ has a Lipschitz
constant $\beta>0$ over $C$. Hence, since
$\sum_{k=0}^{t-1}\alpha_k\leq 2\sqrt{t}$,

\begin{align}
\nonumber
\sum_{k=0}^{t-1}\frac{g_k-g_{k+1}}{\alpha_k}
+\gamma D^2\sum_{k=0}^{t-1}\alpha_k
&=
\frac{g_0}{\alpha_0}-\frac{g_t}{\alpha_{t-1}}+
\sum_{k=1}^{t-1}\left(\frac{1}{\alpha_k}-\frac{1}{\alpha_{k-1}}\right)g_k
+\gamma D^2\sum_{k=0}^{t-1}\alpha_t
\\
\nonumber
&\leq
\beta D\left(\frac{1}{\alpha_0}+
\sum_{k=1}^{t-1}\left(\frac{1}{\alpha_k}-\frac{1}{\alpha_{k-1}}\right)
\right)
+2\gamma D^2\sqrt{t}\\
&=(\beta D+2\gamma D^2)\sqrt{t}\;,
\label{e:op1}
\end{align}
and substituting \eqref{e:op1} into \eqref{e:ol1} yields the
result.
\end{proof}

\begin{remark}
\label{rmk:mods}
Slight modifications in the proof of Theorem~\ref{theo:t} also
yield convergence results for the abs-smooth 
version of existing variants of the smooth Frank-Wolfe algorithm.
\begin{enumerate}
\item
The {\em fixed-horizon Frank-Wolfe algorithm} (see
\textup{\cite{CGFWSurvey2022}}),
where a horizon $T\in\NN$ is chosen and, for all iterations, the
fixed step size $\alpha_t\equiv 1/\sqrt{T}$ is used. This
simplifies \eqref{e:op1} and guarantees that the
gap is bounded by $\mathcal{O}(1/\sqrt{T})$ after $T$ iterations --
consistent with the results which require differentiability
\cite{CGFWSurvey2022}.
\item
The {\em monotone Frank-Wolfe algorithm} \textup{\cite{CBS21}},
wherein we keep the
open-loop step sizes $\alpha_t=1/\sqrt{1+t}$ and modify the iterate
to update $x_{t+1}=\alpha_t v_t + (1-\alpha_t)x_t$ if $f(\alpha_t
v_t + (1-\alpha_t)x_t) < f(x_t)$ and otherwise set $x_{t+1}=x_t$.
Simple case analysis reveals that, in both update scenarios,
\eqref{e:ol1} holds, and the proof proceeds identically as before.
\item
For the setting when $\Delta f(x_t;\cdot)$ is convex,
one can derive an abs-smooth version of the {\em short-step
Frank-Wolfe algorithm} (see \textup{\cite{CGFWSurvey2022}}). In the smooth
setting, the step size is
adaptively selected in order to minimize the smoothness inequality
over $[0,1]$; for the abs-smooth setting, the step size 
$\alpha_t=\min\{1,\Delta f(x_t;v_t-x_t)/2\gamma\|v_t-x_t\|^2\}$
adaptively minimizes the upper bound arising from
Theorem~\ref{theo:approx} over the same set.
Just as in the smooth nonconvex case \textup{\cite{P20}}, the function
values
$(f(x_t))_{t\in\nn}$ are guaranteed to monotonically decrease, and
$\mathcal{O}(1/\sqrt{t})$ convergence is acquired. While only local
convexity of $\Delta f(x_t;\cdot)$ is guaranteed in 
general (cf.  Proposition~\ref{prop:dir}), convexity of $\Delta
f(x_t;\cdot)$ is guaranteed in a variety of applications as detailed in the
beginning of the next section.
\end{enumerate}
\end{remark}

If one uses the open-loop step size strategy $\alpha_t=2/(t+2)$, we
have also observed $\mathcal{O}(1/t)$ convergence in experiments on
both convex and nonconvex objectives. This can be proven to hold
under the convexity-type inequality $\Delta f(x_t;x_*-x_t)\leq
f(x_*)-f(x_t)$, which is not guaranteed to hold in the general abs-smooth setting.


\section{The piecewise linear subproblem}
\label{sec:subprob}

This section concerns the central generalization from the linear
subproblem in Line~\ref{fw:2} of Algorithm~\ref{alg:fw} to the
piecewise linear subproblem in Line~\ref{a:s2} of
Algorithm~\ref{alg:asfw}. In particular, for $\bx\in\mathbb{R}^n$,
and $\alpha>0$ we must solve
\begin{equation}
\label{e:pwp}
\underset{v\in C}{\text{min}}\;\;\Delta f(\bx;\alpha(v-\bx))\;.
\end{equation}
While our theoretical analysis demonstrates that one can 
achieve the same per-iteration rate of convergence as in the smooth
setting of Frank-Wolfe, this nonetheless requires solving 
the more challenging subproblem~\eqref{e:pwp}. As can be seen,
instead of a
constrained linear problem, one now has to solve a constrained
piecewise linear problem. This can be a challenging optimization
problem on its own, and there is no off-the-shelf algorithm for its
solution; see \textup{\cite{Baetal20}} for a recent overview of nonsmooth
optimization approaches and \textup{\cite{KrWaGr21}} for the case of
piecewise linear constraints.

In Section~\ref{sec:subprobsolve}, we discuss our approach for
numerically solving \eqref{e:pwp} in the case when $C$ is
polyhedral. Our methodology is guaranteed to find a local
minimizer, hence for the case when $\Delta f(\bx;\cdot)$ is convex,
we solve \eqref{e:pwp} exactly. As will be shown in Section~\ref{sec:num}, all constrained LASSO
problems (in the sense of \textup{\cite{GaKiZh18}}) have convex
piecewise linearizations. This also holds, e.g., for the Mifflin II
problem in Examples~\ref{ex:M2a} and \ref{ex:M2b} and the
counterexample by Nesterov \textup{\cite[Example~1]{Ne18}}.
In practice, we observe convergence to a first-order minimal
solution even without the assumption of convexity. So, we
conjecture that a relaxation of \eqref{e:pwp} may be sufficient to
yield convergence, and we discuss this gap between theory and
observations further in Section~\ref{sec:altsubprob}.

\subsection{Solving the piecewise linear subproblem}
\label{sec:subprobsolve}

In this subsection, we present an approach to minimize piecewise linear functions on a polyhedral feasible set.
To better explain our methodology, we introduce
the abs-linearization in more detail. Due to the smoothness in
Definition~\ref{def:abssmooth}, the following matrices and vectors
are well-defined 
\begin{equation*}
\begin{aligned}
Z &= \frac{\partial}{\partial x} F(x,z,w) \in \rr^{s \times n} \; , \\
M &= \frac{\partial}{\partial z} F(x,z,w) \in \rr^{s \times s} & \textrm{strictly lower triangular} \; , \\
L &= \frac{\partial}{\partial w} F(x,z,w) \in \rr^{s \times s} & \textrm{strictly lower triangular} \; , \\
a & = \frac{\partial}{\partial x} \varphi(x,z) \in \rr^n \; , & b= \frac{\partial}{\partial z} \varphi(x,z) \in \rr^s \; .
\end{aligned}
\end{equation*}
For $f\in\cc_{\abs}^d(\rr^n)$, these matrices define the {\em
abs-linear form} of its piecewise linear model $f_{PL}$ localized
at $\bx\in\rr^n$ that is given by
\begin{align}
  \begin{split}
    \label{eq:alf}
         z &= c + Z \Delta x + Mz+L|z|\;,\\ 
         f_{PL}(\bx ; \Delta x)  &= d + a^\top\! \Delta x + b^\top \!z\;,
  \end{split}
\end{align}
for every $\Delta x\in\mathbb{R}^n$,
where the constants $c\in\R^s$ and $d \in \R$ are chosen
appropriately \textup{\cite{Gr13}}. Such an abs-linear form can be
generated using appropriate variants of AD
\cite{adolc,cppad,tapenade}. The switching variables $z$ in
\eqref{eq:alf}, which technically depend on $\Delta
x\in\mathbb{R}^n$, are used to define the \emph{signature matrix}
of $f_{PL}(\bx;\cdot)$, given by
\begin{align*}
\Sigma(\Delta x) = \diag(\sigma(\Delta x))\;,\quad \textrm{where} \quad \sigma(\Delta x) = \sign{z(\Delta x)} \;.
\end{align*}
For a fixed signature $\sigma\in\{-1,0,1\}^s$ and
$\Sigma=\diag(\sigma)$, the inverse image 
$$ P_\sigma  \; = \;  \{ \Delta x \in \R^n :  \sgn(z(\Delta x)) = \sigma \}   $$
is called a {\em signature domain}. These regions
$(P_\sigma)_{\sigma\in\{-1,0,1\}^s}$ are relatively open polyhedra 
that form a partition of $\R^n$.
It follows from \cite[Proposition~2.2.2]{Sch12} that
each piecewise linear function can be written in an abs-linear form with appropriately-sized
vectors and real lower-triangular matrices. Since
every abs-linear form has a switching variable $z$, it
also gives rise to signature domains.

Note that the minimization of the piecewise linear function $
f_{PL}(\bx ; \cdot)$ is equivalent to the minimization of $\Delta
f(\bx; \cdot-\bx)$, and their abs-linear forms only differ in the
constants $c$ and $d$. Since both functions are affine when
restricted to a particular signature domain $P_{\sigma}$,
constrained minimization over $P_{\sigma}$ is achieved via the
solution of one linear program (provided a solution exists).

Our approach is to adapt the Active Signature Method (ASM) of
\cite{GW18}, which computes an unconstrained local minimizer of
a given piecewise linear function $\psi$ (for our applications, we
will set $\psi=\Delta f(\bx;\alpha(\cdot-\bx))$).  The key idea of
the ASM as proposed in \textup{\cite{GW18}} is to
perform successive linear minimization over the signature domains
of $\psi$.
Initializing on the signature domain $\sigma$ which contains $\bx$,
ASM then computes a minimizer of a regularized strongly convex
problem $\psi(\cdot)+\|\cdot\|_Q^2$ (where $Q\geq 0$) subject to
the constraint $\overline{P_{\sigma}}$. Note that (a) the quadratic
penalty term ensures the existence of a minimizer, and (b) the
solution may have a different signature than $\bx$ if it resides
on the boundary of a signature domain. 
A major feature of the ASM is that, in polynomial time, it can
determine if the constrained minimizer over $P_\sigma$ is a local
minimizer of $\psi$ over $\R^n$, see \cite{GW18}. If local optimality is detected,
ASM terminates. Otherwise, ASM identifies an adjacent polyhedron
$P_{\sigma^+}$ which ensures descent of the target function $\psi$, see \cite{GW18},
in the sense that $$\min_{v\in P_{\sigma^+}}\psi(v)<\min_{v\in
P_{\sigma}}\psi(v)\;,$$ and repeats an iteration. Since there is a
finite number of signature domains, ASM is guaranteed to terminate.

\begin{algorithm}[b]
\caption{Adapted Active Signature Method (AASM)}
\label{alg:aasm}  
\begin{algorithmic}[1]
\REQUIRE Point $\bx\in \mathbb{R}^n$, piecewise linear function
$\psi$
\STATE Initialize $P\leftarrow P_{\sigma(\bx)}$
\FOR{$t=0$ \textbf{to} $\dotsc$}
\STATE Compute $v_*\in
\argmin_{v\in\overline{P}\cap C}\psi(v)$
\label{aasm:2}
\IF{$v_*$ is a local minimizer of $\psi$ over $C$} 
\STATE
Return $v_*$.
\ELSE 
\STATE
$P\leftarrow P_{\sigma^+}$ which guarantees $P_{\sigma^+}\cap
C\neq\varnothing$ and descent of $\psi$
\label{aasm:pplus}
\ENDIF
\ENDFOR
\end{algorithmic}
\end{algorithm}

For our setting, Algorithm~\ref{alg:aasm} performs successive
minimization of $\psi$ over the signature domains intersected with
$C$. Since our feasible domain $C$ is described by linear
equalities and inequalities, they can be added as additional
constraints to the description of the signature domain $P_\sigma$,
effectively encoding the constraint $\overline{P}_{\sigma}\cap C$
as a single polyhedron.
Since $C$ is compact, $\psi$ is guaranteed to possess a minimizer
on every subdomain $\overline{P}_{\sigma}\cap C$ (provided it is
nonempty). Therefore, we can remove the quadratic regularization
term from ASM which was needed to guarantee existence of a
solution. Since $\psi$ is affine on every subdomain, computing a
minimizer in Line~\ref{aasm:2} of  Algorithm~\ref{alg:aasm} is
performed by a single LP call. The LP over $C\cap\overline{P}$ may
(in the worst case) include $s$ more linear inequality constraints
than minimizing over $C$.
Next, Algorithm~\ref{alg:aasm} proceeds as in the ASM by
checking the optimality conditions of \textup{\cite{GW18}} with $Q=0$.
Algorithm~\ref{alg:aasm} terminates if optimality is detected, and
otherwise it proceeds to a new polyhedron which guarantees descent
of $\psi$, see~\cite{GW18}.

Under the assumption that $\Delta f(x_t;\cdot)$ is a convex
function, local minima
coincide with global minima so we solve \eqref{e:pwp} in
Algorithm~\ref{alg:asfw} by applying Algorithm~\ref{alg:aasm} with
$\psi(\cdot)=\Delta f(x_t;\cdot-x_t)$ and $\bx=x_t$. We have also observed
good algorithmic performance and convergence in settings where
$\Delta f(x_t;\cdot)$ is not guaranteed to be convex. The theoretical analysis of this behavior is the subject of future work.

\subsection{Alternative subproblems}
\label{sec:altsubprob}

Most of the computational effort in
Algorithm~\ref{alg:asfw} comes from computing an abs-linearization
(performed once per iteration), and solving the sequence
of linear programs in the inner solver Algorithm~\ref{alg:aasm}.
Our experiments indicate that Algorithm~\ref{alg:aasm} often
terminates after a low number of iterations, and hence it uses a
low number of LP calls in-practice.
Nonetheless, for some specific problems, solving \eqref{e:pwp} may
require Algorithm~\ref{alg:aasm} to visit all signature domains in
a Klee-Minty exhaustive search \textup{\cite{GW18}}, costing one
LP call per signature domain. Since, in worst case settings, the
number of signature domains is exponential in the number of
switching variables $s$, the theoretical upper-bound
on the number of LP calls per iteration of
Algorithm~\ref{alg:asfw} is quite high. Since this does not reflect
what we see is required in-practice, this begs the question,
{\em does Algorithm~\ref{alg:asfw} produce a first-order minimal
solution of \eqref{e:p} when, in Line~\ref{a:s2}, $v_t$ is only a
partial solution to \eqref{e:pwp}?} 
In the smooth setting, a similar question was answered in the
affirmative via ``lazified'' variants of the Frank-Wolfe
Algorithm~\ref{alg:fw} \cite{BPZ17}.

We provide a partial negative answer to this question by
establishing that one iteration of the inner solver
Algorithm~\ref{alg:aasm} is insufficient to find a solution. This
is demonstrated by considering an abs-linear objective function
$f$. In this setting, at every iteration of
Algorithm~\ref{alg:asfw}, $\Delta f(\bx;\cdot)$ is equal to
$f(\cdot)$ (modulo translation), and hence the signature domains
of the abs-linear model remain unchanged.
Note that, if we only use one inner iteration of
Algorithm~\ref{alg:aasm}, every vertex $(v_t)_{t\in\mathbb{N}}$ of
Algorithm~\ref{alg:asfw} resides in the closure of the same
signature domain, i.e., the initial signature domain. Therefore,
the iterates $(x_t)_{t\in\mathbb{N}}$ will also remain in the
same closed convex set.
So, unless Algorithm~\ref{alg:asfw} is initialized on
a signature domain whose closure contains a first-order minimal
solution of \eqref{e:p}, it cannot yield a solution.

Interestingly, even though replacing Line~\ref{a:s2} in
Algorithm~\ref{alg:asfw} with one iteration of
Algorithm~\ref{alg:aasm} will not yield a solution in general, this
algorithm {\em is} sufficient to guarantee that
$\Delta f(x_t;\alpha_t(v_t-x_t))/\alpha_t$ converges to zero!
Indeed, the proof of
Theorem~\ref{theo:t} only relies on Theorem~\ref{theo:approx} and
negativity of our generalized gap, which we point out below.
\begin{lemma}
\label{l:s1m}
Let $\alpha>0$, let $\bx\in\mathbb{R}^n$, and let
$f\in\cc_{\abs}^d(\rr^n)$. After one iteration of
Algorithm~\ref{alg:aasm} with $\psi=\Delta
f(\bx;\alpha(\cdot-\bx))$ and $x_0=\bx$, we have
\begin{equation*}
\Delta f(\bx;\alpha(v_1-\bx))\leq 0\;.
\end{equation*}
\end{lemma}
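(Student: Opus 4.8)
The plan is to unwind the very first pass of Algorithm~\ref{alg:aasm} and observe that the input point $\bx$ is itself a feasible competitor whose objective value vanishes. Concretely, upon initialization the method sets $P \leftarrow P_{\sigma(\bx)}$, the signature domain of $\psi$ containing $\bx$, and a single execution of Line~\ref{aasm:2} returns
\[
v_1 \in \argmin_{v \in \overline{P_{\sigma(\bx)}} \cap C} \psi(v), \qquad \text{where } \psi(\cdot) = \Delta f(\bx; \alpha(\cdot - \bx)).
\]
Since $\psi$ is affine on the closed polyhedron $\overline{P_{\sigma(\bx)}} \cap C$ and $C$ is the feasible set of the outer algorithm, existence of this minimizer is not at issue; the entire content of the lemma is the sign of $\psi(v_1)$.

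First I would verify that $\bx$ lies in the feasible region of this minimization. By construction $\bx \in P_{\sigma(\bx)} \subseteq \overline{P_{\sigma(\bx)}}$, and since the lemma is invoked inside Algorithm~\ref{alg:asfw} with $\bx = x_t \in C$, we also have $\bx \in C$; hence $\bx \in \overline{P_{\sigma(\bx)}} \cap C$. Next I would evaluate the objective at this feasible point, using $\Delta f(\bx; 0) = 0$ --- the same consequence of Theorem~\ref{theo:approx} already exploited in Corollary~\ref{cor:sign} --- to conclude
\[
\psi(\bx) = \Delta f(\bx; \alpha(\bx - \bx)) = \Delta f(\bx; 0) = 0.
\]
The claim then follows immediately from optimality of $v_1$: since $\bx$ is feasible and $\psi(v_1) \leq \psi(w)$ for every feasible $w$, we obtain $\Delta f(\bx; \alpha(v_1 - \bx)) = \psi(v_1) \leq \psi(\bx) = 0$.

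I do not expect any genuine obstacle here, since no quadratic estimate or smoothness argument is needed --- the result is essentially a tautology once the first iteration is identified with a single constrained linear program over $\overline{P_{\sigma(\bx)}} \cap C$. The only point requiring care is feasibility of $\bx$ for that program, i.e., the implicit use of $\bx \in C$; this is where the placement of the lemma in the setting of Algorithm~\ref{alg:asfw} (where every development point $x_t$ lies in $C$) is essential.
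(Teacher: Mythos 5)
Your proof is correct and takes essentially the same route as the paper's one-line argument: $v_1$ minimizes $\psi$ over $\overline{P_{\sigma(\bx)}}\cap C$, a set containing $\bx$, and $\psi(\bx)=\Delta f(\bx;0)=0$, so $\psi(v_1)\leq 0$. Your extra observation that feasibility of $\bx$ requires $\bx\in C$ (not stated in the lemma's hypotheses, but guaranteed when the lemma is invoked inside Algorithm~\ref{alg:asfw} with $\bx=x_t$) is a point the paper's proof silently glosses over, and is a welcome clarification rather than a divergence.
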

\begin{proof}
Since $v_1$ is the result of minimizing over
$\overline{P_{\sigma}}\ni\bx$, we know
$\Delta f(\bx;\alpha(v_*-\bx))\leq \Delta
f(\bx;\alpha(\bx-\bx))=0$.
\end{proof}
In view of Lemma~\ref{l:s1m}, one could use the same proof
technique in Theorem~\ref{theo:t} to show that $\Delta
f(x_t;\alpha_t(v_t-x_t))/\alpha_t$ converges to zero. However,
since $v_t$ is no longer a solution to \eqref{e:fom2}, $\Delta
f(x_t;\alpha_t(v_t-x_t))/\alpha_t$ is not our generalized
Frank-Wolfe gap, so we can not infer first-order minimality via Theorem~\ref{theo:fom1}.

In all of our experiments, we have observed convergence to a
solution even when $\Delta f(\bx;\cdot)$ was not guaranteed to be
convex. Therefore, since Algorithm~\ref{alg:aasm} terminates when
it detects local optimality, we conjecture that
Algorithm~\ref{alg:asfw} will still yield a first-order minimal
solution of \eqref{e:p} when provided a locally minimal solution to
\eqref{e:pwp}.

\section{Numerical examples}
\label{sec:num}
To verify our theoretical findings, we implemented the Frank-Wolfe
approach for abs-smooth functions as stated in
Algorithm~\ref{alg:asfw} in C++. In Section~\ref{subsec:RN}, we
benchmark our subproblem solver Algorithm~\ref{alg:aasm}. In
Section~\ref{subsec:tests}, we test our full algorithm on a suite
of scalable problems from nonsmooth optimization, and particular
attention is given to constrained LASSO problems in
Section~\ref{sec:lasso}.
For the generation of the local piecewise
linear model with
abs-linearization we used ADOL-C \cite{adolc}. For Linear
Programming, we employed the solver HiGHS \cite{HuHa18}.

\subsection{Algorithm~\ref{alg:aasm} (AASM): Rosenbrock-Nesterov II}
\label{subsec:RN}
In this section, we analyze the performance of our inner solver
AASM on the Rosenbrock-Nesterov II function. According to
\textup{\cite{GuOv12}}, Nesterov suggested the Rosenbrock-like test
function defined as
  \begin{align}
 \psi : \R^{n} \rightarrow \R, \quad \psi(x)=\, &
\tfrac{1}{4}  \; |x_1-1|\; + \, \sum_{i=1}^{n-1} \left  |x_{i+1}
-2|x_i|+1 \right |\;,  \label{secondrosen}
  \end{align}
  which is piecewise linear and nonconvex. The analysis presented
in \textup{\cite{GuOv12}} shows that this test function 
has (a) a unique global minimizer $x^*=(1,1, \ldots, 1)\in\R^n$ and 
(b) $2^{n-1}-1$ other stationary points which are not local minima,
at which nonsmooth optimization algorithms may get stuck.
 Numerical tests showed that with the initial point
   \begin{align*}
     x_{0,1}= -1,\qquad x_{0,i} = 1\quad 2\le i \le n\;,
  \end{align*}
   it is very likely to encounter all of the stationary points; see
   again \textup{\cite{GuOv12}}. This also explains the high number
of iterations
   required by AASM since it visits all the stationary points and
verifies the non-optimality of all of them, except for the last one.
One has to note that for the examples solved with our Frank-Wolfe
methods the number of iterations in each inner solve is in almost
all cases less than five.
   
   Comparisons of three different
solvers, i.e., a bundle method, an adapted quasi-Newton method, and
ASM were presented for $1\le n \le 10$ in
\textup{\cite{GW18}}.
It was found that the bundle method and the adapted quasi-Newton
method got trapped at one of the stationary points for $3\le n \le
10$ after a high number of iterations. That is, they were not able
to find the minimizer. On the other hand, for all
values of $n$ considered, ASM reached the
minimizer exactly after $2^{n}$ iterations. Larger values of $n$
were not considered due to numerical difficulties for these higher
dimensions.

   To validate the proposed Frank-Wolfe algorithm for abs-smooth functions, we  introduced artificial bounds on the variables such that we have a compact convex feasible set. That is, we consider
      \begin{align*}
     C = \{ x \in \R^n \,|\,-20 \le x_{i} \le 20,\;  1\le i \le n\} \;.
      \end{align*}
Note that the constraint $C$ excludes neither the suboptimal
stationary points nor the global minimizer.

We coded the function evaluation exactly as stated in \eqref{secondrosen}. That is,
the required abs-linear form \eqref{eq:alf} was generated
by ADOL-C. When applying Algorithm~\ref{alg:aasm} to minimize $\psi$ as defined in
\eqref{secondrosen}, we obtain the behavior shown in
Table~\ref{tab:rosnes}, where we also state the number of existing
polyhedra for $n\le 10$ but skip this number for $n>10$ due to the
very large value. As compared to ASM which
took $2^n$ iterations \cite{GW18}, the iteration numbers reduced
significantly to $2^{n-1}$ corresponding exactly to the number of
stationary points. Hence, due to the LP solve, AASM actually visits a very small fraction of all existing polyhedra
(cf. Table~\ref{tab:rosnes}). Furthermore, the dimension $n$ could
be increased
considerably more than in \textup{\cite{GW18}}. It is very interesting to
note that the presolve of
the linear solver HiGHS reduces the linear optimization problem on
each polyhedron to an empty one such that a solution could be
computed without performing a simplex step at all -- as seen in
Table~\ref{tab:rosnes}. This is a tremendous advantage in
comparison to ASM which can be seen as an adapted QP solver.
We
observe this behavior of the linear solver also for other test
problems. Due to this fact, even the largest instance
which required more than 500,000 iterations could be solved
in less than three minutes.
  \begin{table}[ht]
     \begin{center}
    \begin{tabular}{|l|c|c|c|c|c|c|c|c|c|c|} \hline
      \qquad\quad $n$        \!&\multicolumn{1}{c|}{1}&\multicolumn{1}{c|}{2}&\multicolumn{1}{c|}{3}&\multicolumn{1}{c|}{4}&\multicolumn{1}{c|}{5}&\multicolumn{1}{c|}{6}&\multicolumn{1}{c|}{7}&\multicolumn{1}{c|}{8}&\multicolumn{1}{c|}{9}&\multicolumn{1}{c|}{10}\\ \hline
      \# polyhedra   & 1 & 8 & 32 & 128 & 512 & 2048 & 8192 & 32768 & 131072 & 524288 \\
      \# iter (AASM) & 1 & 2 & 4 & 8 & 16 & 32 & 64 & 128 & 256 & 512\\
      \# iter (simplex) & 0 & 0 & 0 & 0 & 0 & 0 & 0 & 0 & 0 & 0 \\\hline \hline
      \qquad\quad $n$        \!&\multicolumn{1}{c|}{11}&\multicolumn{1}{c|}{12}&\multicolumn{1}{c|}{13}&\multicolumn{1}{c|}{14}&\multicolumn{1}{c|}{15}&\multicolumn{1}{c|}{16}&\multicolumn{1}{c|}{17}&\multicolumn{1}{c|}{18}&\multicolumn{1}{c|}{19}&\multicolumn{1}{c|}{20}\\ \hline
      \# iter (AASM) &  1024 & 2048 & 4096 & 8192 & 16384 &  32768 & 65536 &   131072 &  262144 & 524288 \\
      \# iter (simplex) & 0 & 0 & 0 & 0 & 0 & 0 & 0 & 0 & 0 & 0 \\ \hline \end{tabular}
     \end{center}
	 \caption{Number of signature domains and iteration counts for
Rosenbrock-Nesterov II example and Algorithm~\ref{alg:aasm}, i.e.,
the adapted active signature method (AASM).}
     \label{tab:rosnes}
   \end{table}

\subsection{Algorithm~\ref{alg:asfw}: Standard nonsmooth problems}
\label{subsec:tests}

We tested our abs-smooth Frank-Wolfe (ASFW)
Algorithm~\ref{alg:asfw} on several standardized test cases (for
details, see
\textup{\cite{BaKaMae14}}). For that purpose, we
implemented the two convex examples MAXQ and Chained LQ as well as
the four nonconvex examples: Number of active faces, Chained
Mifflin 2, Chained Crescent 1, and Chained Crescent 2.
Furthermore, we tested the first nonsmooth
and nonconvex Rosenbrock-Nesterov function analyzed in
\cite{GuOv12}. For all benchmark objectives except for MAXQ, we add bounds 
\begin{align*}
C = \{ x \in \R^n\,|\, -5 \le x_i \le 5 , 1 \le i \le n\}\;,
\end{align*}
which do not interfere with the optimal solution, allowing us to
properly test our implementation.
Section~\ref{sec:maxq} reports on the results for MAXQ and
considers different feasible sets in which constraints are active at the
solution. All example functions are scalable such that we could
examine various dimensions $n$. For all test cases, we observe a
very similar convergence behavior for the
step size $\alpha_t=1/\sqrt{1+t}$, namely the convergence rate
$\cO(1/\sqrt{t})$.

Since the results are very similar for all test cases, we
illustrate the convergence behavior just for two convex examples
(Sections~\ref{sec:maxq} and \ref{sec:clq}) and two nonconvex
examples (Sections~\ref{sec:nRN} and \ref{sec:nCC1}).

\subsubsection{MAXQ problem}
\label{sec:maxq}
The MAXQ problem is given by
\begin{align}
 f: \R^n \mapsto \R, f(x)& = \max_{1\leq i\leq n} x_i^2 \label{eq:maxq}\\
(\forall i\in\{1,2,\ldots,n\})\quad
(x_0)_i&=\begin{cases}
\phantom{-}i&\text{if}\;\;i\in\{1,\ldots,\lfloor n/2\rfloor \} \; ,\\
-i&\text{if}\;\;i\in\{\lfloor n/2\rfloor +1,\ldots,n\}\;.
\end{cases} \nonumber
 \end{align}
For this academic test case, we also consider the following feasible sets:
\begin{align*}
C_1  & = \left\{ x \in \R^n\,|\, -5 \le x_i \le 2i-2 \mbox{ for }
i\in\{1,\ldots,\lfloor n/2\rfloor \};\quad -2i+2\le x_i \le 5  \mbox{ for }
i\in\{\lfloor n/2\rfloor +1,...,n\}\right\}\;,\\
C_2 & = \left\{ x \in \R^n\,|\, \phantom{-}0\le x_i \le
2i-2\mbox{ for } i\in\{1,\ldots,\lfloor n/2\rfloor \};\quad
-2i+2\le x_i \le 0  \mbox{ for }
i\in\{\lfloor n/2\rfloor +1,\ldots,n\}\right\}\;,\\
C_3 & = \left\{ x \in \R^n\,|\, \phantom{-}1\le x_i \le 2i-1
\mbox{ for } i\in\{1,\ldots,\lfloor n/2\rfloor \};\quad -2i+1\le x_i \le -1  \mbox{
for } i\in\{\lfloor n/2\rfloor +1,\ldots,n\}\right\}\;.
\end{align*}
The constraints in $C_1$ are inactive at the global solution $x_* =
0\in \R^n$, while constraints in $C_2$ are active precisely at
$x_*$. For $C_3$ we obtain the new optimal solution whose $i$th
component is given by
\begin{equation*}
 (x_*)_{i} =\begin{cases}
\phantom{-}1 &\text{for}\;\; i\in\{1,\ldots,\lfloor n/2\rfloor \} \; ,\\
-1&\text{for}\;\; i\in\{\lfloor n/2\rfloor +1,\ldots,n\}\;.
\end{cases}
\end{equation*}  
Once more, we coded the function evaluation exactly as stated in \eqref{eq:maxq}. That is,
the abs-smooth form was used only internally in ASFW to generate the numerical results. For the step size $\alpha_t=1/\sqrt{1+t}$, the observed convergence
history is shown in Figure~\ref{fig:maxq}. Algorithm~\ref{alg:asfw}
terminated regularly with a norm of the generalized Frank-Wolf gap
being smaller than $10^{-10}$. For all combinations
considered here, the function values also converged to the optimal
value.

\begin{figure}[t]
\begin{center}
\subfloat[][feasible set $C_1$]{
\centering
\begin{tikzpicture}[scale=1.0]
\begin{axis}[height=5.5cm,width=6.5cm, legend cell align={left},
ylabel style={yshift=-0.75em,font=\scriptsize}, 
xlabel style={yshift=0.75em,font=\scriptsize},
tick label style={font=\tiny},
legend entries={$n=20$,$n=10$,$n=5$},
legend style={font=\tiny},
xlabel=Iteration $t$, 
ylabel=$-\Delta f(x_t;\alpha_t(v_t-x_t))/\alpha_t$,
grid,
xmin =0, xmax=60, ymin=1e-10, ymax=1e3, ymode=log, mark repeat = 8]
\addplot+[thick, mark=triangle, color=bred] table[x={iter}, y={n_20}]
{matlab/iter_20_sqrt.txt};
\addplot+[thick, mark=square, color=bblu] table[x={iter}, y={n_10}]
{matlab/iter_20_sqrt.txt};
\addplot+[thick, mark=o, color=black] table[x={iter}, y={n_5}]
{matlab/iter_20_sqrt.txt};
\end{axis}
\end{tikzpicture}
}
\quad
\subfloat[][feasible set $C_2$]{
\begin{tikzpicture}[scale=1.0]
\begin{axis}[height=5.5cm,width=6.5cm, legend cell align={left},
label style={font=\tiny},
ylabel style={yshift=-0.75em,font=\scriptsize}, 
xlabel style={yshift=0.75em,font=\scriptsize},
tick label style={font=\tiny},
legend entries={$n=20$,$n=10$,$n=5$},
legend style={font=\tiny},
xlabel=Iteration $t$, 
ylabel=$-\Delta f(x_t;\alpha_t(v_t-x_t))/\alpha_t$,
grid,
xmin =0, xmax=80, ymin=1e-10, ymax=1e3, ymode=log, mark repeat = 8]
\addplot [thick, mark=o, color=black] table[x={iter}, y={n_20}]
{matlab/iter_24_n_20_sqrt.txt};
\addplot [thick, mark=square, color=bblu] table[x={iter}, y={n_10}]
{matlab/iter_24_n_10_sqrt.txt};
\addplot [thick, mark=triangle, color=bred] table[x={iter}, y={n_5}]
{matlab/iter_24_n_5_sqrt.txt};
\end{axis}
\end{tikzpicture}
}
\quad
\subfloat[][feasible set $C_3$]{
\begin{tikzpicture}[scale=1.0]
\begin{axis}[height=5.5cm,width=6.5cm, legend cell align={left},
label style={font=\tiny},
ylabel style={yshift=-0.75em,font=\scriptsize}, 
xlabel style={yshift=0.75em,font=\scriptsize},
tick label style={font=\tiny},
legend entries={$n=20$,$n=10$,$n=5$},
legend style={font=\tiny},
xlabel=Iteration $t$, 
ylabel=$-\Delta f(x_t;\alpha_t(v_t-x_t))/\alpha_t$,
grid,
xmin =0, xmax=100, ymin=1e-7, ymax=1e3, ymode=log, mark repeat = 10]
\addplot [thick, mark=o, color=black] table[x={iter}, y={n_20}]
{matlab/iter_25_n_20_sqrt.txt};
\addplot [thick, mark=square, color=bblu] table[x={iter}, y={n_10}]
{matlab/iter_25_n_10_sqrt.txt};
\addplot [thick, mark=triangle, color=bred] table[x={iter}, y={n_5}]
{matlab/iter_25_n_5_sqrt.txt};
\end{axis}
\end{tikzpicture}
}
\end{center}
\caption{Generalized Frank-Wolfe gap
$-\Delta f(x_t;\alpha_t(v_t-x_t))/\alpha_t$ versus iteration count
$t$ displaying the convergence behavior of Algorithm~\ref{alg:asfw}
with $\alpha_t=1/\sqrt{1+t}$ on the MAXQ problem
(Section~\ref{sec:maxq}) for various values of $n$.}
\label{fig:maxq}
\end{figure}
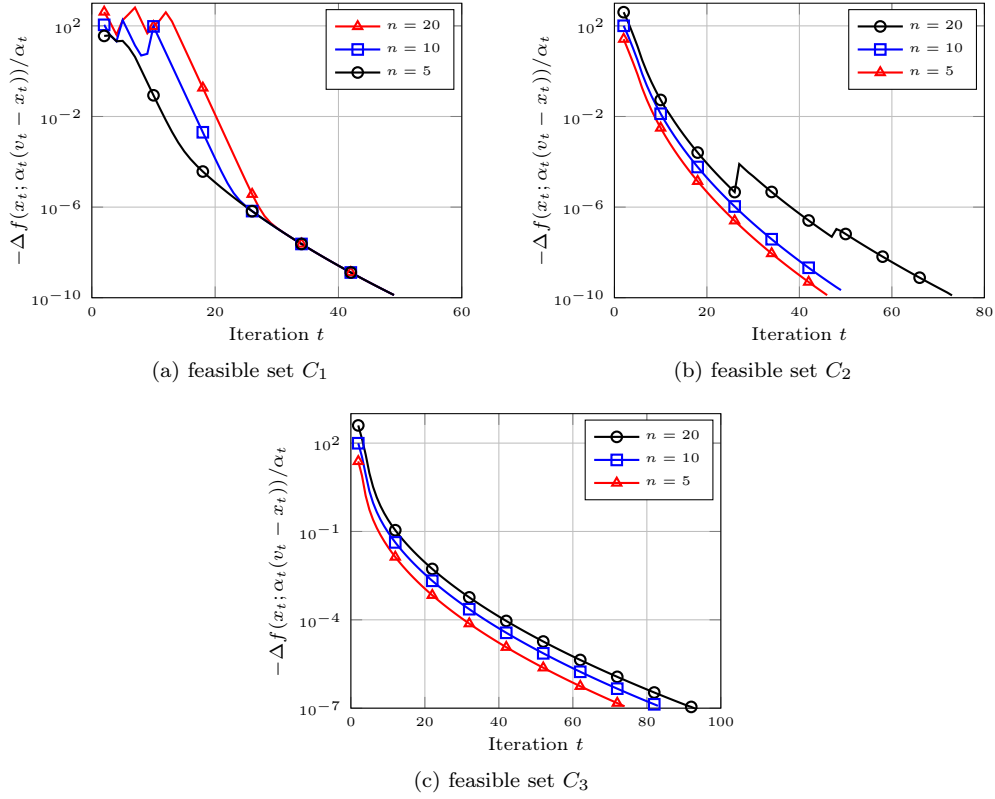

\subsubsection{Chained LQ problem}
\label{sec:clq}
The convex Chained LQ objective problem is given by
\begin{align}
  f(x)& = \sum_{i=1}^{n-1} \max\left\{-x_i -x_{i+1},\; -x_i-x_{i+1}+x_i^2 + x_{i+1}^2-1\right\} \label{eq:chainedLQ1}\\
      & = \sum_{i=1}^{n-1} \tfrac{1}{2}\left(-2x_i -2x_{i+1}+ x_i^2 + x_{i+1}^2-1+\left|x_i^2 + x_{i+1}^2-1\right|\right) \; ,\label{eq:chainedLQ2}\\
     x_{0,i}& = -0.5 \quad \mbox{for all }\;  i=1,...,n\;, \nonumber
\end{align}
where \eqref{eq:chainedLQ1} is the version usually stated for this test problem and also used here for the implementation, whereas \eqref{eq:chainedLQ2} can be used to derive a  corresponding abs-smooth form. For a fixed $\bx$, the  abs-linearization generated in an automated fashion by ADOL-C is given by
\begin{align*}
  \Delta f(\bx,\Delta x)
  & = \sum_{i=1}^{n-1}\Big(-\Delta x_i -\Delta x_{i+1}+ \bx_i\Delta x_i + \bx_{i+1}\Delta x_{i+1}\\
  & \qquad \qquad  +\tfrac{1}{2}\big|\bx_i^2 + \bx_{i+1}^2-1+2\bx_i\Delta x_i + 2\bx_{i+1}\Delta x_{i+1}\big|-\tfrac{1}{2}\big|\bx_i^2 + \bx_{i+1}^2-1\big|\Big)\;.
\end{align*}
As can be seen, this function is convex in $\Delta x$. Hence, Algorithm~\ref{alg:aasm} indeed solves \eqref{e:pwp} globally such that our convergence theory holds. 
For different values of $n$, the convergence history for the first 500 iterates is illustrated in Figure~\ref{fig:CLQ_st}.
One can clearly observe the convergence rate $\cO(1/\sqrt{t})$,
where the rate coefficients vary with $n$, as can be seen from the
proof of Theorem~\ref{theo:t}. Furthermore, we also observe
convergence of the function value to the optimal quantity 
$-(n-1)\sqrt{2}$ for all considered dimensions $n$.
\begin{figure}[t]
\begin{center}             
\begin{tikzpicture}[scale=1.0]
\begin{axis}[height=7cm,width=12.0cm, 
label style={font=\scriptsize},
ylabel style={yshift=-0.85em}, 
xlabel style={yshift=0.75em},
tick label style={font=\tiny},
legend style={font=\tiny},
legend entries={
$c/\sqrt{t}$,
$n=100$,
$n=20$,
$n=5$,
},
xmin =0, xmax=500, ymin=1e-0, ymax=3e4, ymode=log, mark repeat =
100,
xlabel=Iteration $t$, 
ylabel=$-\Delta f(x_t;\alpha_t(v_t-x_t))/\alpha_t$,
grid
]
\addplot [thick, mark=o, mark size=4pt, color=black]
table[x={iter}, y={sqrt}] {matlab/iter_21_sqrt.txt};
\addplot [thick, mark=square, mark size=4pt, color=bblu]
table[x={iter}, y={n_100}] {matlab/iter_21_sqrt.txt};
\addplot [thick, mark=triangle, mark size=4pt, color=bred]
table[x={iter}, y={n_20}] {matlab/iter_21_sqrt.txt};
\addplot [thick, mark=x, mark size=4pt, color=bcyan]
table[x={iter}, y={n_5}] {matlab/iter_21_sqrt.txt};
\end{axis}
\end{tikzpicture}
\end{center}
\caption{Convergence behavior of Algorithm~\ref{alg:asfw} with
$\alpha_t=1/\sqrt{1+t}$ 
on the Chained LQ problem (Section~\ref{sec:clq}) in various
dimensions $n$.}
\label{fig:CLQ_st}
\end{figure}
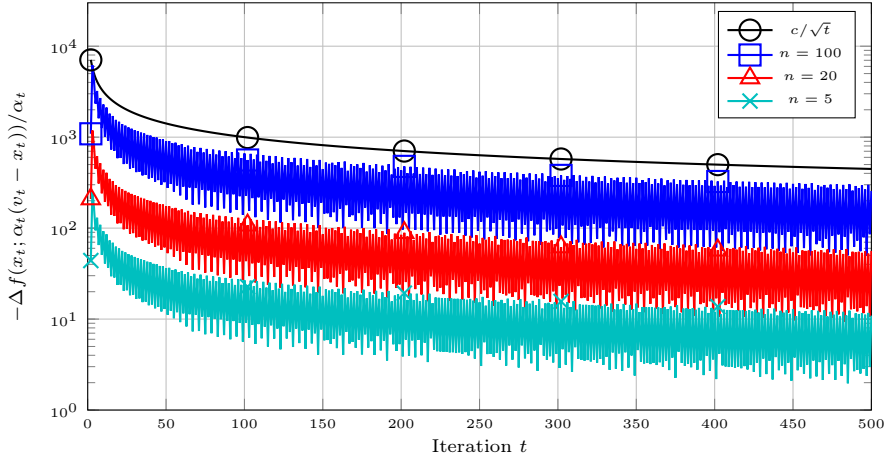

\subsubsection{Nonconvex Rosenbrock-Nesterov problem}
\label{sec:nRN}
Next, we present the results for the nonconvex Rosenbrock-Nesterov
function
 \begin{align*}
  f(x) & =\tfrac{1}{4}(x_1-1)^2 +\sum_{i=1}^{n-1}\left|x_{i+1}-2x_i^2 +1\right| \; ,\\
\quad
  (x_0)_i&=\begin{cases}
-0.5 &\text{if}\; \mod (i,2)=1 \\
\phantom{-}0.5&\text{if}\;  \mod (i,2)=0\;
\end{cases}\qquad \mbox{for}\quad i\in\{1,\ldots,n\}\;.
 \end{align*}
 For different values of $n$, the convergence history for the first 500 iterates  is illustrated in Figure~\ref{fig:ros_st}. Once more, the
convergence rate $\cO(1/\sqrt{t})$ is clearly visible and the
heights of the lines vary with $n$, which is consistent with the
prefactor's dependence on the set diameter $D$ in the proof of
Theorem~\ref{theo:t}. We again observe convergence of the function
value to the optimal value $0$ for all considered dimensions $n$.
\begin{figure}[t]
\begin{center}             
\begin{tikzpicture}[scale=1.0]
\begin{axis}[height=7cm,width=12.0cm, 
label style={font=\scriptsize},
ylabel style={yshift=-0.5em}, 
xlabel style={yshift=0.75em},
tick label style={font=\tiny},
legend style={font=\tiny},
legend entries={
$c/\sqrt{t}$,
$n=100$,
$n=20$,
$n=5$,
},
xmin =0, xmax=2000, ymin=5e-4, ymax=1e3, ymode=log, mark repeat =
200,
xlabel=Iteration $t$, 
ylabel=$-\Delta f(x_t;\alpha_t(v_t-x_t))/\alpha_t$,
grid
]
\addplot [thick, mark=o, mark size=3pt, color=black]
table[x={iter}, y={sqrt}] {matlab/iter_31_sqrt.txt};
\addplot [thick, mark=square, mark size=3pt, color=bblu]
table[x={iter}, y={n_100}] {matlab/iter_31_sqrt.txt};
\addplot [thick, mark=triangle, mark size=3pt, color=bred]
table[x={iter}, y={n_20}] {matlab/iter_31_sqrt.txt};
\addplot [thick, mark=x, mark size=3pt, color=bcyan]
table[x={iter}, y={n_5}] {matlab/iter_31_sqrt.txt};
\end{axis}
\end{tikzpicture}
\end{center}
\caption{Convergence behavior of Algorithm~\ref{alg:asfw} with
$\alpha_t=1/\sqrt{1+t}$ 
on the first Rosenbrock-Nesterov problem (Section~\ref{sec:nRN}) in
various dimensions $n$.}
\label{fig:ros_st}
\end{figure}
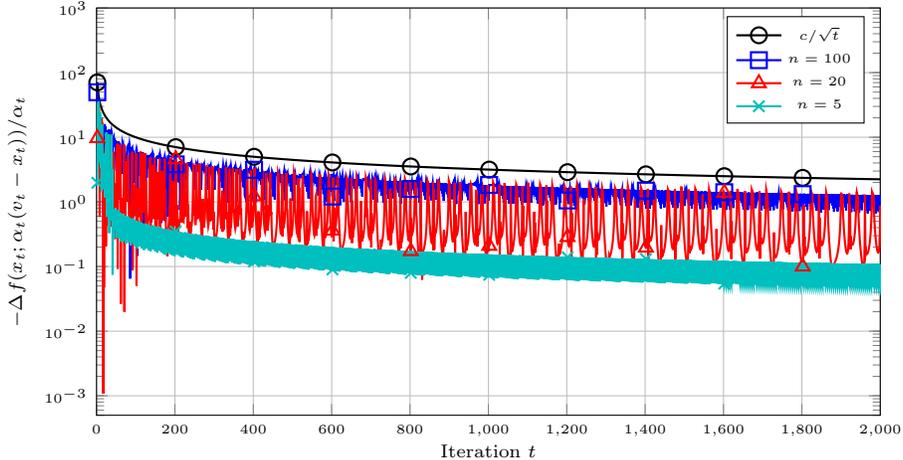

\subsubsection{Nonconvex Chained Crescent 1 problem}
\label{sec:nCC1}
The nonconvex Chained Crescent 1 problem is given by
\begin{align*}
  & f(x)= \max \left\{f_1(x),f_2(x)\right\},&\\
  & \mbox{with}\quad f_1(x)=\sum_{i=1}^{n-1}\left(x_i^2 + (x_{i+1}
-1)^2 +x_{i+1} -1\right),\quad f_2(x)=\sum_{i=1}^{n-1}\left(-x_i^2
- (x_{i+1} -1)^2 +x_{i+1} +1\right) \; ,\\
& (x_0)_i=\begin{cases}
-1.5 &\text{if}\; \mod (i,2)=1\\
\phantom{-}2.0&\text{if}\;  \mod (i,2)=0\;
\end{cases}\qquad \mbox{for}\quad i\in\{1,\ldots,n\}\;.
 \end{align*}
For this problem, we tested the step size  $\alpha_t=2/(t+2)$
since, according to the theory developed for Frank-Wolfe methods,
one may expect to observe a convergence rate of $\cO(1/t)$. We can
verify this convergence behavior numerically; see
Figure~\ref{fig:CC1_t} for the convergence history of the first
500 iterates. However, it is currently unknown if one can always
expect this convergence rate.
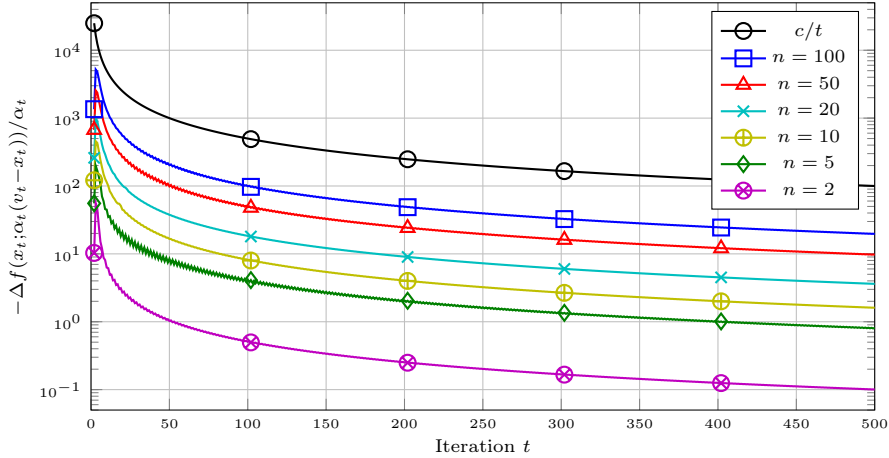
\begin{figure}[ht]
\begin{center}             
\begin{tikzpicture}[scale=1.0]
\begin{axis}[height=7cm,width=12.0cm, 
ylabel style={yshift=-0.75em}, 
xlabel style={font=\scriptsize, yshift=0.75em},
tick label style={font=\tiny},
legend style={font=\scriptsize},
legend entries={
$c/t$,
$n=100$,
$n=50$,
$n=20$,
$n=10$,
$n=5$,
$n=2$,
},
xmin =0, xmax=500, ymin=5e-2, ymax=5e4, ymode=log, mark repeat =
100,
xlabel=Iteration $t$, 
ylabel=$\scriptstyle{-\Delta f(x_t;\alpha_t(v_t-x_t))/\alpha_t}$,
grid
]
\addplot [thick, mark=o, mark size=3pt, color=black]
table[x={iter}, y={t}] {matlab/iter_34_t.txt};
\addplot [thick, mark=square, mark size=3pt, color=bblu]
table[x={iter}, y={n_100}] {matlab/iter_34_t.txt};
\addplot [thick, mark=triangle, mark size=3pt, color=bred]
 table[x={iter}, y={n_50}] {matlab/iter_34_t.txt};
\addplot [thick, mark=x, mark size=3pt, color=bcyan]
table[x={iter}, y={n_20}] {matlab/iter_34_t.txt};
\addplot [thick, mark=oplus, mark size=3pt, color=byellow]
table[x={iter}, y={n_10}] {matlab/iter_34_t.txt};
\addplot [thick, mark=diamond, mark size=3pt, color=bgreen]
table[x={iter}, y={n_5}] {matlab/iter_34_t.txt};
\addplot [thick, mark=otimes,mark size=3pt, color=bpurp]
table[x={iter}, y={n_2}] {matlab/iter_34_t.txt};
\end{axis}
\end{tikzpicture}
\end{center}
\caption{Convergence behavior of Algorithm~\ref{alg:asfw} with
step sizes $\alpha_t=2/(t+2)$
on the Chained Crescent 1 problem (Section~\ref{sec:nCC1}) in
various dimensions $n$.}
\label{fig:CC1_t}
\end{figure}

\subsection{Constrained LASSO problems}
\label{sec:lasso}

Finally, we consider the constrained LASSO problem
\begin{align}
  \min\limits_{x \in \R^n} \; &\tfrac{1}{2} \| A x - y\|^2_2   +  \rho \|x\|_1  \label{eq:lasso}\\
  \mbox{s.t.}\quad  & Bx = b\quad\text{ and } \quad Mx \le d\;, \nonumber
\end{align}  
where $y \in \R^p$ is the response vector, $A\in  \R^{p\times n}$ is the design
matrix, $x \in \R^n$ is the vector of unknown regression coefficients, and
$\rho \ge 0$ is a regularization parameter. The matrices $B\in
\R^{r \times n}$, $M\in \R^{q \times n}$ and the vectors $b\in
\R^r$, $d\in \R^q$ describe additional equality and inequality
constraints. As
its name suggests, the constrained LASSO augments the standard
LASSO \textup{\cite{Ti96}} with additional constraints that allow to take
prior knowledge into account.  This could be, for example, an
ordering of the regression coefficients leading to an ordered LASSO
problem or the requirement of positive regression coefficients
yielding the positive LASSO.

The abs-linearization of the objective function \eqref{eq:lasso} given by
\begin{align*}
  \Delta f(\bx,\Delta x) = (\bx^\top A^\top  - y)A\Delta x  +  \rho \left(\|\bx+\Delta x\|_1-\|\bx\|_1\right)  
\end{align*}
is convex in $\Delta x$. While this fact is not exploited in the
convergence analysis of Theorem~\ref{theo:t}, this does mean that
our subproblem solver Algorithm~\ref{alg:aasm} will yield a global
minimizer.

To test the performance of our algorithm, we use random data
so that we can scale the dimension arbitrarily. The entries of the
design matrix $A$ and the response are generated as independent and
identical standard normal variables. We examine two variants of
\eqref{eq:lasso}. First, we add again the bound constraints
\begin{align*}
C = \{ x \in \R^n \,|\,-5 \le x_{i} \le 5,\;  1\le i \le n\}\;.
\end{align*}
 Hence, for $n$ and $p$ large enough
one expects $0\in\R^n$ as the optimal solution. 
As initial point, we use  a randomly generated vector with entries
which are identically standard normal distributed. 

For the step size $\alpha_t=1/\sqrt{1+t}$, the results using various values of $n$ and $p$ are shown in
Figure~\ref{fig:bclasso}. All optimizations terminated at the optimal solution
with the criterion $\Delta f(x_t,\alpha_t(v_t-x_t))=0$, i.e., when
a first-order minimal point was found (cf. Theorem~\ref{theo:fom1}).
For several combinations of $n$ and $p$, Algorithm~\ref{alg:asfw}
reached a first-order minimal point early and therefore the
corresponding curve ends. Once more, the convergence rate of
$\cO(1/\sqrt{t})$ proven in Theorem~\ref{theo:t} is clearly visible
in all scenarios. 

\begin{figure}[t]
\begin{center}             
\begin{tabular}{c c}
\subfloat[][$n=125$]{
\begin{tikzpicture}[scale=1.0]
\begin{axis}[height=5.5cm,width=6.5cm, legend pos=north east,
ylabel style={yshift=-0.75em},
xlabel style={yshift=0.75em,font=\scriptsize},
tick label style={font=\tiny},
legend style={font=\tiny},
legend entries={
$c/\sqrt{t}$,
$p=500$,
$p=375$,
$p=250$,
},
xmin =0, xmax=500, ymin=1e1, ymax=5e5, ymode=log, mark repeat =
60,
xlabel=Iteration $t$, 
ylabel=$\scriptstyle{-\Delta f(x_t;\alpha_t(v_t-x_t))/\alpha_t}$,
grid
]
\addplot [thick, mark=o, mark size=2.5pt, color=black]
table[x={n}, y={sqrt}] {matlab/iter_lasso_125_sqrt.txt};
\addplot [thin, mark=square, mark size=2.5pt, color=bblu]
table[x={n}, y={p_500}] {matlab/iter_lasso_500_125_sqrt.txt};
\addplot [thin, mark=triangle, mark size=2.5pt, color=bred]
table[x={n}, y={p_375}] {matlab/iter_lasso_375_125_sqrt.txt};
\addplot [thin, mark=otimes, mark size=2.5pt, color=bcyan]
table[x={n}, y={p_250}] {matlab/iter_lasso_250_125_sqrt.txt};
\end{axis}
\end{tikzpicture}
}
&
\subfloat[][$n=250$]{
\begin{tikzpicture}[scale=1.0]
\begin{axis}[height=5.5cm,width=6.5cm, legend pos=north east,
ylabel style={yshift=-0.75em},
xlabel style={yshift=0.75em,font=\scriptsize},
tick label style={font=\tiny},
legend style={font=\tiny},
legend entries={
$c/\sqrt{t}$,
$p=1000$,
$p=750$,
$p=500$,
},
xmin =0, xmax=500, ymin=1e1, ymax=5e5, ymode=log, mark repeat =
60,
xlabel=Iteration $t$, 
ylabel=$\scriptstyle{-\Delta f(x_t;\alpha_t(v_t-x_t))/\alpha_t}$,
grid
]
\addplot [thick, mark=o, mark size=2.5pt, color=black]
table[x={n}, y={sqrt}] {matlab/iter_lasso_250_sqrt.txt};
\addplot [thin, mark=square, mark size=2.5pt, color=bblu]
table[x={n}, y={p_1000}] {matlab/iter_lasso_1000_250_sqrt.txt};
\addplot [thin, mark=triangle, mark size=2.5pt, color=bred]
table[x={n}, y={p_750}] {matlab/iter_lasso_750_250_sqrt.txt};
\addplot [thin, mark=otimes, mark size=2.5pt, color=bcyan]
table[x={n}, y={p_500}] {matlab/iter_lasso_500_250_sqrt.txt};
\end{axis}
\end{tikzpicture}
}
\\ 
\subfloat[][$n=500$]{
\begin{tikzpicture}[scale=1.0]
\begin{axis}[height=5.5cm,width=6.5cm, legend pos=north east,
ylabel style={yshift=-0.75em},
xlabel style={yshift=0.75em,font=\scriptsize},
tick label style={font=\tiny},
legend style={font=\tiny},
legend entries={
$c/\sqrt{t}$,
$p=2000$,
$p=1500$,
$p=1000$,
},
xmin =0, xmax=800, ymin=1e1, ymax=5e6, ymode=log, mark repeat =
140,
xlabel=Iteration $t$, 
ylabel=$\scriptstyle{-\Delta f(x_t;\alpha_t(v_t-x_t))/\alpha_t}$,
grid
]
\addplot [thick, mark=o, mark size=2.5pt, color=black]
table[x={n}, y={sqrt}] {matlab/iter_lasso_500_sqrt.txt};
\addplot [thin, mark=square, mark size=2.5pt, color=bblu]
table[x={n}, y={p_2000}] {matlab/iter_lasso_2000_500_sqrt.txt};
\addplot [thin, mark=triangle, mark size=2.5pt, color=bred]
table[x={n}, y={p_1500}] {matlab/iter_lasso_1500_500_sqrt.txt};
\addplot [thin, mark=otimes, mark size=2.5pt, color=bcyan]
table[x={n}, y={p_1000}] {matlab/iter_lasso_1000_500_sqrt.txt};
\end{axis}
\end{tikzpicture}
}
& 
\subfloat[][$n=1000$]{
\begin{tikzpicture}[scale=1.0]
\begin{axis}[height=5.5cm,width=6.5cm, legend pos=north east,
ylabel style={yshift=-0.75em},
xlabel style={yshift=0.75em,font=\scriptsize},
tick label style={font=\tiny},
legend style={font=\tiny},
legend entries={
$c/\sqrt{t}$,
$p=4000$,
$p=3000$,
$p=2000$,
},
xmin =0, xmax=2000, ymin=1e1, ymax=5e6, ymode=log, mark repeat =
203,
xlabel=Iteration $t$, 
ylabel=$\scriptstyle{-\Delta f(x_t;\alpha_t(v_t-x_t))/\alpha_t}$,
grid
]
\addplot [thick, mark=o, mark size=2.5pt, color=black]
table[x={n}, y={sqrt}] {matlab/iter_lasso_1000_sqrt.txt};
\addplot [thin, mark=square, mark size=2.5pt, color=bblu]
table[x={n}, y={p_4000}] {matlab/iter_lasso_4000_1000_sqrt.txt};
\addplot [thin, mark=triangle, mark size=2.5pt, color=bred]
table[x={n}, y={p_3000}] {matlab/iter_lasso_3000_1000_sqrt.txt};
\addplot [thin, mark=otimes, mark size=2.5pt, color=bcyan]
table[x={n}, y={p_2000}] {matlab/iter_lasso_2000_1000_sqrt.txt};
\end{axis}
\end{tikzpicture}
}
\end{tabular}
\end{center}
\caption{
Convergence behavior of Algorithm~\ref{alg:asfw} with
$\alpha_t=1/\sqrt{1+t}$ 
on the bound-constrained LASSO problem (Section~\ref{sec:lasso})
for various values of $(n,p)$.}
\label{fig:bclasso}
\end{figure}

Furthermore, we also tested the step size $\alpha_t=2/(t+2)$. The optimization history for various values of $n$ and $p$ are shown in
Figure~\ref{fig:bclasso1}. Once more, all optimizations terminated at the optimal solution
with the stopping criterion $\Delta f(x_t,\alpha_t(v_t-x_t))=0$.
The observed convergence rate is $\cO(1/t)$, motivating further
research in this direction.
\begin{figure}[t]
\begin{center}             
\begin{tabular}{c c}
\subfloat[][$n=125$]{
\begin{tikzpicture}[scale=1.0]
\begin{axis}[height=5.5cm,width=6.5cm, legend pos=south west,
ylabel style={yshift=-0.75em}, 
xlabel style={yshift=0.75em,font=\scriptsize},
tick label style={font=\tiny},
legend style={font=\tiny},
legend entries={
$c/t$,
$p=500$,
$p=375$,
$p=250$,
},
xmin =0, xmax=40, ymin=1e-2, ymax=5e6, ymode=log, mark repeat =
8,
xlabel=Iteration $t$, 
ylabel=$\scriptstyle{-\Delta f(x_t;\alpha_t(v_t-x_t))/\alpha_t}$,
grid
]
\addplot [thick, mark=o, mark size=2.5pt, color=black]
table[x={n}, y={t}] {matlab/iter_lasso_125_t.txt};
\addplot [thick, mark=square, mark size=2.5pt, color=bblu]
table[x={n}, y={p_500}] {matlab/iter_lasso_500_125_t.txt};
\addplot [thick, mark=triangle, mark size=2.5pt, color=bred]
table[x={n}, y={p_375}] {matlab/iter_lasso_375_125_t.txt};
\addplot [thick, mark=otimes, mark size=2.5pt, color=bcyan]
table[x={n}, y={p_250}] {matlab/iter_lasso_250_125_t.txt};
\end{axis}
\end{tikzpicture}
}
& 
\subfloat[][$n=250$]{
\begin{tikzpicture}[scale=1.0]
\begin{axis}[height=5.5cm,width=6.5cm, legend pos=south west,
ylabel style={yshift=-0.75em}, 
xlabel style={yshift=0.75em,font=\scriptsize},
tick label style={font=\tiny},
legend style={font=\tiny},
legend entries={
$c/t$,
$p=1000$,
$p=750$,
$p=500$,
},
xmin =0, xmax=50, ymin=1e-2, ymax=5e6, ymode=log, mark repeat =
8,
xlabel=Iteration $t$, 
ylabel=$\scriptstyle{-\Delta f(x_t;\alpha_t(v_t-x_t))/\alpha_t}$,
grid
]
\addplot [thick, mark=o, mark size=2.5pt, color=black]
table[x={n}, y={t}] {matlab/iter_lasso_250_t.txt};
\addplot [thick, mark=square, mark size=2.5pt, color=bblu]
table[x={n}, y={p_1000}] {matlab/iter_lasso_1000_250_t.txt};
\addplot [thick, mark=triangle, mark size=2.5pt, color=bred]
table[x={n}, y={p_750}] {matlab/iter_lasso_750_250_t.txt};
\addplot [thick, mark=otimes, mark size=2.5pt, color=bcyan]
table[x={n}, y={p_500}] {matlab/iter_lasso_500_250_t.txt};
\end{axis}
\end{tikzpicture}
}
\\ 
\subfloat[][$n=500$]{
\begin{tikzpicture}[scale=1.0]
\begin{axis}[height=5.5cm,width=6.5cm, legend pos=south west,
ylabel style={yshift=-0.75em}, 
xlabel style={yshift=0.75em,font=\scriptsize},
tick label style={font=\tiny},
legend style={font=\tiny},
legend entries={
$c/t$,
$p=2000$,
$p=1500$,
$p=1000$,
},
xmin =0, xmax=60, ymin=1e-2, ymax=5e6, ymode=log, mark repeat =
8,
xlabel=Iteration $t$, 
ylabel=$\scriptstyle{-\Delta f(x_t;\alpha_t(v_t-x_t))/\alpha_t}$,
grid
]
\addplot [thick, mark=o, mark size=2.5pt, color=black]
table[x={n}, y={t}] {matlab/iter_lasso_250_t.txt};
\addplot [thick, mark=square, mark size=2.5pt, color=bblu]
table[x={n}, y={p_2000}] {matlab/iter_lasso_2000_500_t.txt};
\addplot [thick, mark=triangle, mark size=2.5pt, color=bred]
table[x={n}, y={p_1500}] {matlab/iter_lasso_1500_500_t.txt};
\addplot [thick, mark=otimes, mark size=2.5pt, color=bcyan]
table[x={n}, y={p_1000}] {matlab/iter_lasso_1000_500_t.txt};
\end{axis}
\end{tikzpicture}
}
&
\subfloat[][$n=1000$]{
\begin{tikzpicture}[scale=1.0]
\begin{axis}[height=5.5cm,width=6.5cm, legend pos=south west,
ylabel style={yshift=-0.75em}, 
xlabel style={yshift=0.75em,font=\scriptsize},
tick label style={font=\tiny},
legend style={font=\tiny},
legend entries={
$c/t$,
$p=4000$,
$p=3000$,
$p=2000$,
},
xmin =0, xmax=90, ymin=1e-2, ymax=5e6, ymode=log, mark repeat =
8,
xlabel=Iteration $t$, 
ylabel=$\scriptstyle{-\Delta f(x_t;\alpha_t(v_t-x_t))/\alpha_t}$,
grid
]
\addplot [thick, mark=o, mark size=2.5pt, color=black]
table[x={n}, y={t}] {matlab/iter_lasso_1000_t.txt};
\addplot [thick, mark=square, mark size=2.5pt, color=bblu]
table[x={n}, y={p_4000}] {matlab/iter_lasso_4000_1000_t.txt};
\addplot [thick, mark=triangle, mark size=2.5pt, color=bred]
table[x={n}, y={p_3000}] {matlab/iter_lasso_3000_1000_t.txt};
\addplot [thick, mark=otimes, mark size=2.5pt, color=bcyan]
table[x={n}, y={p_2000}] {matlab/iter_lasso_2000_1000_t.txt};
\end{axis}
\end{tikzpicture}
}
\end{tabular}
\end{center}
\caption{
Convergence behavior of Algorithm~\ref{alg:asfw} with
$\alpha_t=2/(t+2)$ 
on the
bound-constrained LASSO problem (Section~\ref{sec:lasso}) for
various values of $(n,p)$.}
\label{fig:bclasso1}
\end{figure}

For the second setting, we studied a LASSO problem where the
feasible set is given by
\begin{align*}
\tilde{C} = \{ x \in \R^n \,|\,-5 \le x_{0}\le x_1 \le \cdots \le x_n \le 5\}\;.
\end{align*}
This setting corresponds to the requirement that the parameters
grow monotonically, which is reasonable for example in some climate
models \cite{GaKiZh18}. As initial point, we use 
\begin{align*}
(x_0)_i = -1+\frac{2(i-1)}{n-1}\qquad 1\le i \le n\;,
\end{align*}
i.e., a feasible vector with equality distributed values from $-1$
to $1$.

For the combinations $(n,p)\in\{125\}\times\{250,375,500\}$,
$(n,p)\in\{250\}\times\{500,750,1000\}$, $(n,p)\in\{500\}\times
\{1500,2000\}$ and the step size  $\alpha_t=1/\sqrt{1+t}$, Algorithm~\ref{alg:asfw} determined the optimal
solution $x_* =0\in \R^n$ within two iterations, hence we do not
plot the convergence for this experiment.
Note that all constraints except for the lower and upper bound
are active at the optimal solution found. Using the step size
$\alpha_t=2/(t+2)$, we also observed the same convergence rate
of $\cO(1/t)$ as in Figure~\ref{fig:bclasso1}. Near the end of our
experiments in both Figures~\ref{fig:bclasso} and
\ref{fig:bclasso1}, the algorithm terminates because 
the generalized Frank-Wolfe gap is zero. This sudden stopping is
reminiscent of the smooth Frank-Wolfe literature
\cite{CGFWSurvey2022} -- in certain simple settings (e.g., for a
linear or quadratic objective when the algorithm reaches the
optimal face of $C$), one iteration of the Frank-Wolfe algorithm
 exactly solves the optimization problem. Also note that for $n = 250$  the termination order of the algorithm for the different values of $p$ changes. While the early termination may happen when using
 Frank-Wolfe algorithms, we actually have no guarantee that the point of termination has to satisfy
an order based on the dimension.

\section{Summary and outlook}%
\label{sec:final}

Even nowadays the solution of nonsmooth constrained optimization
problems forms a challenge and correspondingly there is no
off-the-shelf
algorithm available. For a compact convex feasible set, we have shown that
Algorithm~\ref{alg:asfw}, which appears to be the first connection between
the fields of abs-smooth optimization and conditional gradient
algorithms, can exhibit the same per-iteration rate of convergence
as the Frank-Wolfe algorithm for smooth nonconvex objectives.

We have shown that, in generalizing from the smooth setting to the
abs-smooth setting, the Frank-Wolfe gap becomes nonlinear and
nonsmooth, and computing this gap necessitates the solution of a
piecewise linear minimization problem as opposed to linear
minimization in the smooth case. 
Our methodology stands in contrast to recent approaches which
identify subclasses of nonsmooth functions for which, when one
performs linear minimization against a subgradient, the Frank-Wolfe
algorithm converges \cite{AN22,GH16,O23}. The approaches in
\cite{AN22,GH16} only consider convex objectives, and \cite{O23}
considers a class of functions which does not even contain the
convex piecewise linear counterexample by Nesterov
\cite[Example~1]{Ne18}. Instead of finding a smaller class of
functions for which subgradient-approaches work, we have
generalized the Frank-Wolfe algorithm itself and shown that it will
still converge on a broader class of functions.

The proposed algorithm can be implemented easily based on an AD tool
that provides the required abs-linearization and an LP solver.
The numerical illustrations in Sections~\ref{subsec:tests} and
\ref{sec:lasso} exhibit that the theory developed in this work is
consistent with the rates observed in practice. 
Furthermore, we
observed experimentally that improved rates are possible with the
step size strategy $\alpha_t=2/(t+2)$.

As mentioned earlier, the current form of our algorithm comes with
the drawbacks that (A) storing $\Delta f(\overline{x};\cdot)$
scales quadratically with $s$, and (B) computing $\Delta
f(\overline{x};\cdot)$ requires roughly an amount of work proportional
to computing a gradient. While our algorithm is effective on
moderately-sized problems, these are the central bottlenecks
preventing the algorithm from solving very high-dimensional
problems. However, we are hopeful that (A) can be resolved because
we have observed that the matrices decribing $\Delta f(\overline{x};\cdot)$
are often very sparse. Furthermore, we are hopeful that $\Delta
f(\overline{x};\cdot)$ could be reasonably approximated using
block activation strategies similar to SGD or prox-based methods,
which would yield an avenue for the resolution of (B). If these
algorithmic challenges are addressed, our approach would be more
readily applicable on very large-scale problems.

As part of this line of work, we are left with several
questions which we are eager to study in future work. Firstly,
based on numerical experimentation and existing Frank-Wolfe
literature, we believe that showing an $\cO(1/t)$ convergence rate
may be possible with a step size strategy of $\alpha_t=2/(t+2)$.
We also believe that
the convergence analysis in Theorem~\ref{theo:t} could be refined
to directly incorporate the number of switching variables $s$. This
would be particularly useful for improving convergence rates for
functions with a low number of switching variables.
Finally, our numerics indicate that Algorithm~\ref{alg:asfw} works
as long as one has a local minimizer to our
ASFW-subproblem~\eqref{e:pwp}. If this is true in general, our
approach of Algorithm~\ref{alg:asfw} using Algorithm~\ref{alg:aasm}
as a subproblem solver would always converge to a solution, whether
or not the objective function's piecewise linear model is convex.

\section*{Acknowledgments}

The authors thank the Deutsche Forschungsgemeinschaft for their support within Projects A05 and B10 in the Sonderforschungsbereich/Transregio 154 {\em Mathematical Modelling, Simulation and Optimization using the Example of Gas Networks} (project ID: 239904186). 
The work was funded partly  by the Deutsche Forschungsgemeinschaft (DFG, German Research
Foundation) under Germany's Excellence Strategy -- The Berlin Mathematics
Research Center MATH+ (EXC-2046/1, project ID: 390685689).
The data that support the findings of this study are available from the corresponding author upon request.

Conflicts of Interest: The authors declare no conflict of interest.

\printbibliography{}

\end{document}